\pgfplotsset{compat=1.6}
\pgfplotsset{soldot/.style={color=blue,only marks,mark=*}} \pgfplotsset{holdot/.style={color=blue,fill=white,only marks,mark=*}}
\numberwithin{equation}{section}
\numberwithin{equation}{section}
\newtheorem{Th}{Theorem}[section]
\newtheorem{Lemma}[Th]{Lemma}
\theoremstyle{remark}
\newtheorem{Rem}{Remark}[section]
\newtheorem{Question}{Question}[section]
\def\XXint#1#2#3{{\setbox0=\hbox{$#1{#2#3}{\int}$ }
		\vcenter{\hbox{$#2#3$ }}\kern-.6\wd0}}
\title{Exceptional behavior in critical first-passage percolation and random sums}
\author{Michael Damron \thanks{Email: mdamron6@protonmail.com. The research of M. D. is supported by NSF grant DMS-2054559.} \\ \small{Georgia Tech}  \and Jack Hanson \thanks{Email: jhanson@ccny.cuny.edu. The research of J. H. is supported by NSF grant DMS-1954257, a PSC-CUNY grant and a CUNY JFRASE award via the CUNY Office of Research and the Sloan Foundation.}\\ \small{City College of New York} \and David Harper \thanks{Email: dharper40@gatech.edu. The research of D. H. is partially supported by NSF grant DMS-2054559.} \\ \small{Georgia Tech} \and Wai-Kit Lam\thanks{Email: waikitlam@ntu.edu.tw. The research of W.-K. L. is supported by the National Science and Technology Council in Taiwan Grant 110-2115-M002-012-MY3 and NTU New Faculty Founding Research Grant NTU-111L7452.} \\ \small{National Taiwan University}}
\begin{document}
	
	\maketitle 
	\begin{abstract}
We study first-passage percolation (FPP) on the square lattice. The model is defined using i.i.d.~nonnegative random edge-weights $(t_e)$ associated to the nearest neighbor edges of $\mathbb{Z}^2$. The passage time between vertices $x$ and $y$, $T(x,y)$, is the minimal total weight of any lattice path from $x$ to $y$. The growth rate of $T(x,y)$ depends on the value of $F(0) = \mathbb{P}(t_e=0)$: if $F(0) < 1/2$ then $T(x,y)$ grows linearly in $|x-y|$, but if $F(0) > 1/2$ then it is stochastically bounded. In the critical case, where $F(0) = 1/2$, $T(x,y)$ can be bounded or unbounded depending on the behavior of the distribution function $F$ of $t_e$ near 0. In this paper, we consider the critical case in which $T(x,y)$ is unbounded and prove the existence of an incipient infinite cluster (IIC) type measure, constructed by conditioning the environment on the event that the passage time from $0$ to a far distance remains bounded. This IIC measure is a natural candidate for the distribution of the weights at a typical exceptional time in dynamical FPP. A major part of the analysis involves characterizing the limiting behavior of independent nonnegative random variables conditioned to have small sum. We give conditions on random variables that ensure that such limits are trivial, and  several examples that exhibit nontrivial limits.
	\end{abstract}

	\section{Introduction}

\subsection{Background}

	In standard first-passage percolation (FPP) on $\mathbb{Z}^d$, nonnegative independent and identically distributed edge weights are assigned to the edges of the hypercubic lattice, and one considers properties of the resulting weighted graph metric. The model has generated much interest due to its mathematical challenges and connections \cite{C18,J00,R81} as well as the insights it provides into real-world phenomena, but many fundamental issues in FPP remain unresolved \cite{ADH17}. One important problem is the behavior of the model under perturbations: how does the structure of the metric space change under resampling of the random environment? The reader is directed to, for example, \cite{ADS23a,ADS23b,GH20}, for work on this question.

          In 2015, Ahlberg \cite{A15} introduced a dynamical version of the model, in which edge weights are resampled according to independent rate one Poisson processes, to study this question. Ahlberg was interested in possible ``exceptional'' behavior of the model, where the FPP metric $T$ behaves very differently at random times  than its almost sure behavior at deterministic times. Interesting exceptional behavior of this type is exhibited in a number of related models \cite{BHPS03,FNRS09,HPS97,SSS17}. Ahlberg showed that at the broadest relevant scale, no exceptional behavior is possible: almost surely, the linear growth rate of $T$ is the same at all times.

          However, in an important special case of the model, so-called critical FPP, the standard linear scaling fails to capture the leading-order growth rate of $T$. In critical FPP, the fraction of zero edge weights exactly equals $p_c$, the critical probability for Bernoulli percolation. In two-dimensional critical FPP, the asymptotic growth rate of $T$ is well-characterized \cite{DHL23,DLW17,Y18}. At one extreme, $T(0, x)$ can grow logarithmically in $|x|$; at the other, it can fail to diverge at all; it can also exhibit intermediate behavior, all depending on the shape of the distribution function of the edge weights near zero. In what follows, we focus on the case where $T(0, x)$ almost surely diverges as $|x| \to \infty$; a precise characterization appears below at \eqref{eq: rho_characterization}.

          The dynamical versions of such models of critical FPP exhibit exceptional times on constant scale --- namely, for $L\geq 0$, there almost surely exist random times at which the set $\{x: \, T(0, x) \leq L\}$ is infinite. The paper \cite{DHHL21} inaugurated the study of such exceptional times and showed that the set of exceptional times can display a wide range of behavior, depending on the underlying weight distribution. In particular, it is possible for the set of exceptional times to have upper Minkowski dimension $31/36$ or upper Minkowski dimension $1$, though its Hausdorff dimension is always $31/36$.

          The wide range of possible exceptional behavior leads naturally to the question of how this behavior is realized. What does the random environment look like at an exceptional time? The present work begins the study of this question. We construct a target distribution for this behavior by conditioning on $T$ metric balls being abnormally large: namely, conditioning that the passage time $T(0, \partial B(n))$ from $0$ to the boundary of $[-n, n]^2$ be smaller than some fixed $L \geq 0$ and taking $n \to \infty$. This is the FPP analogue of Kesten's incipient infinite cluster (IIC) measure \cite{kesteniic}. In Theorem~\ref{thm: main_result} below, we show that for any $L$, this measure is closely related to the IIC and characterize their relationship precisely.

          Our work involves the study of random sums, of independent but not necessarily i.i.d.~terms, conditioned to lie close to the infimum of their support. Our main result on such sums, which may be of independent interest, is Lemma~\ref{lem: resampling} below. It characterizes the distribution of a typical term of such a conditioned sum, under certain assumptions about the sequence of terms. More concretely, if $X_1, X_2, \dots$ are independent, nonnegative random variables and $L,\delta \geq 0$, the lemma gives conditions under which
          \[
          \mathbb{P}(X_1 \geq \delta \mid X_1 + \dots X_n \leq L) \to 0 \text{ as } n \to \infty.
          \]
          In the appendix, we study properties of such conditioned sums in detail. We show consequences of our Lemma~\ref{lem: resampling} for classes of such sums and also provide examples where the assumptions of the lemma do not hold, and where the distribution of a typical term looks very different from the conclusion of the lemma. We pose open problems relating to the distribution of terms of such conditioned random sums.

          %that if enough variables have sufficient mass close to the infimum of their support, the distribution of  a term of the conditioned sum is close to a delta mass at its infimum.  In Appendix~\ref{what}, we study properties of such conditioned sums in detail. We show consequences of our Lemma~\ref{what} for classes of such sums and also show examples where such trivial behavior is not exhibited, or exhibited only for certain types of conditioning, and we pose open problems related to the characterization of this behavior.

        %In these notes we will try to organize the current proofs for the IIC project. So first we need some definitions and background.

        \subsection{Formal definitions and main result}

	We now give formal definitions needed to state the main result. We will work on the two-dimensional square lattice with vertex set $\mathbb{Z}^2$ and its set $\mathbb{E}^2$ of nearest-neighbor edges. Let $(t_e)_{e \in \mathbb{E}^2}$ be an i.i.d.~family of nonnegative random variables with distribution function $F$. A path from a vertex $x$ to a vertex $y$ is an alternating sequence $x = x_0, e_0, x_1, e_2, \dots, x_{n-1}, e_{n-1}, x_n = y$ of vertices and edges such that for each $i = 0, \dots, n-1$, $e_i$ has endpoints $x_i$ and $x_{i+1}$. The passage time of a path $\gamma$ is $T(\gamma) = \sum_{i=0}^{n-1} t_{e_i}$, and the first-passage time between vertices $x$ and $y$ is
\[
T(x,y) = \inf_{\gamma : x \to y} T(\gamma),
\]
where the infimum is over all paths from $x$ to $y$.

For $n \geq 1$, consider the box $B(n) = [-n,n]^2$ with its vertex boundary $\partial B(n) = \{x \in B(n) : \|x\|_\infty = n\}$. We will be interested in the variable
\[
T(0,\partial B(n)) = \inf_{\gamma : 0  \to \partial B(n)} T(\gamma),
\]
where the infimum is over all paths starting at $0$ and ending at some vertex of $\partial B(n)$. The asymptotic behavior of $T(0,\partial B(n))$ is known to depend on the value of $F(0) = \mathbb{P}(t_e=0)$. Concretely, we have
\begin{align*}
\liminf_{n \to \infty} \frac{T(0,\partial B(n))}{n} > 0 \text{ a.s.} &\hspace{.3in}\text{if } F(0) < \frac{1}{2} \text{ and } \\
T(0,\partial B(n)) \text{ is a.s. bounded } &\hspace{.3in}\text{if } F(0) > \frac{1}{2}.
\end{align*}
In the ``critical'' case, where 
\begin{equation}\label{eq: percolation_condition}
F(0) = \frac{1}{2},
\end{equation}
the growth is more complicated and depends on the behavior of $F$ near 0. To describe it, let $F^{-1}$ be the generalized inverse
\[
F^{-1}(t) = \inf\{ x \in \mathbb{R} : F(x) \geq t\} \text{ for } t \in (0,1)
\]
and for $k \geq 2$, define $a_k = F^{-1}(1/2 + 1/2^k)$. Then \cite[Cor.~1.3]{DLW17} states that if we define the a.s.~monotone limit
\[
\rho = \lim_{n \to \infty} T(0,\partial B(n)),
\]
then
\begin{equation}\label{eq: rho_characterization}
\rho \text{ is }~\begin{cases}
\text{finite a.s.} &\quad\text{ if } \sum_{k=2}^\infty a_k < \infty \\
\text{infinite a.s.} &\quad\text{ if } \sum_{k=2}^\infty a_k = \infty.
\end{cases}
\end{equation}
Furthermore, if $\mathbb{E}t_e^{\eta} < \infty$ for some $\eta > 1/4$, then
\[
\mathbb{E} T(0,\partial B(n)) \asymp \sum_{k=2}^{\lfloor \log n \rfloor} a_k,
\]
meaning that the ratio of the left and right sides is bounded away from 0 and $\infty$ as $n \to \infty$.

We are interested in the critical case in which $\sum_{k=2}^\infty a_k = \infty$, so that $\rho = \infty$ a.s.. We wish to determine the conditional distribution of $(t_e)$ given the zero probability event that $\rho < \infty$. This distribution is the FPP analogue of the incipient infinite cluster (IIC) of Bernoulli percolation. As in that context, we phrase the question using a limiting procedure. Let $E \subset [0,\infty)^{\mathbb{E}^2}$ be a cylinder event---a product measurable set depending on finitely many coordinates. Our main result is that the limit
\[
\lim_{L \to \infty} \lim_{n \to \infty} \mathbb{P}((t_e) \in E \mid T(0,\partial B(n)) \leq L) \text{ exists}
\]
and is essentially Kesten's IIC measure. To define this measure, let $(t_e^B)$ be an i.i.d.~collection of Bernoulli$(1/2)$ random variables indexed by $\mathbb{E}^2$. Writing $T^B(\cdot, \cdot)$ for the corresponding passage time, it is known that $T^B(0,\partial B(n)) \to \infty$ a.s.~as $n \to \infty$ so that, in particular, $\mathbb{P}(T^B(0,\partial B(n)) = 0) \to 0$ as $n \to \infty$. If $E$ is a cylinder event, then Kesten's result \cite[Thm.~3]{kesteniic} states that the limit
\[
\nu(E) = \lim_{n \to \infty} \mathbb{P}((t_e^B) \in E \mid T^B(0,\partial B(n)) = 0) \text{ exists}
\]
and $\nu$ can be uniquely extended to a probability measure called the incipient infinite cluster. This $\nu$ is supported on configurations with $\rho = 0$. Analogously, we can define an IIC-type measure for first-passage percolation:
\begin{Lemma}\label{lem: fpp_iic}
Suppose that \eqref{eq: percolation_condition} holds. For any cylinder event  $E \subset [0,\infty)^{\mathbb{E}^2}$, the following limit exists:
\[
\widetilde{\nu}(E) = \lim_{n \to \infty} \mathbb{P}((t_e) \in E \mid T(0,\partial B(n))=0).
\]
If $E$ depends only on the variables $(\mathbf{1}_{\{t_e > 0\}})$, then $\widetilde{\nu}(E) = \nu(E)$.
\end{Lemma}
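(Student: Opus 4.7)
The plan is a conditional-independence reduction to Kesten's IIC theorem \cite[Thm.~3]{kesteniic} applied to the indicator field $\omega_e := \mathbf{1}_{\{t_e = 0\}}$. Under \eqref{eq: percolation_condition} this is an i.i.d.\ Bernoulli$(1/2)$ field representing the critical bond percolation of zero-weight edges. I would decompose $t_e = (1-\omega_e) s_e$, where $(s_e)$ is an i.i.d.\ family with distribution $\mathbb{P}(s_e \leq x) = 2F(x) - 1$ on $(0,\infty)$, independent of $(\omega_e)$. The crucial structural observation is that the conditioning event
\[
A_n := \{T(0,\partial B(n)) = 0\}
\]
coincides with $\{0 \leftrightarrow \partial B(n) \text{ via edges with } \omega_e = 1\}$ and is therefore measurable with respect to $(\omega_e)$ alone, with exactly the law of the event on which Kesten conditions.

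Fix a cylinder event $E$ depending on the edges in a finite set $\mathcal{E}$, and write $\omega_\mathcal{E} = (\omega_e)_{e \in \mathcal{E}}$. I would begin with the total-probability decomposition
\[
\mathbb{P}(E \mid A_n) = \sum_{\eta \in \{0,1\}^\mathcal{E}} \mathbb{P}(E \mid \omega_\mathcal{E} = \eta,\, A_n)\, \mathbb{P}(\omega_\mathcal{E} = \eta \mid A_n).
\]
Given $\omega_\mathcal{E} = \eta$, the event $E$ becomes a function only of the i.i.d.\ variables $(s_e)_{e \in \mathcal{E},\, \eta_e = 0}$, whereas $A_n$ remains a function of the field $(\omega_e)$. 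Because $(s_e)$ is independent of $(\omega_e)$, this yields
\[
\mathbb{P}(E \mid \omega_\mathcal{E} = \eta,\, A_n) = \mathbb{P}(E \mid \omega_\mathcal{E} = \eta),
\]
a quantity that does not depend on $n$.

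For the second factor, Kesten's theorem applied to the Bernoulli field $(\omega_e)$ gives $\mathbb{P}(\omega_\mathcal{E} = \eta \mid A_n) \to \nu(\omega_\mathcal{E} = \eta)$ as $n \to \infty$. Since the sum has only $2^{|\mathcal{E}|}$ terms, passing to the limit establishes existence of $\widetilde\nu(E)$ with the explicit formula
\[
\widetilde\nu(E) = \sum_{\eta \in \{0,1\}^\mathcal{E}} \mathbb{P}(E \mid \omega_\mathcal{E} = \eta)\, \nu(\omega_\mathcal{E} = \eta).
\]
When $E$ depends only on $(\mathbf{1}_{\{t_e > 0\}})_{e \in \mathcal{E}}$, equivalently on $(\omega_e)_{e \in \mathcal{E}}$, the conditional probability $\mathbb{P}(E \mid \omega_\mathcal{E} = \eta)$ reduces to the indicator $\mathbf{1}_{\{\eta \in E\}}$, so the sum collapses to $\nu(E)$, yielding the second claim.

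The argument contains no essential obstacle; it is a direct reduction to Kesten's IIC theorem via the product decomposition of the edge law. The only point requiring care is the conditional independence of $E$ and $A_n$ given $\omega_\mathcal{E}$, which is transparent from the factorization $t_e = (1-\omega_e) s_e$ with $(\omega_e)$ and $(s_e)$ independent.
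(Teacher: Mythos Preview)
Your proof is correct and follows essentially the same approach as the paper: both factor the weights as a product of an i.i.d.\ Bernoulli$(1/2)$ field and an independent positive part, observe that the conditioning event $\{T(0,\partial B(n))=0\}$ is measurable in the Bernoulli field alone, and then invoke Kesten's IIC theorem. The only cosmetic difference is that the paper phrases the final step via weak convergence of the pair $((t_e^B),(s_e))$ and the continuous mapping theorem, whereas you compute $\mathbb{P}(E\mid A_n)$ directly as a finite sum over configurations $\eta\in\{0,1\}^{\mathcal{E}}$; both arrive at the same explicit description of $\widetilde\nu$.
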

We give the proof of Lem.~\ref{lem: fpp_iic} after stating our main result. It shows that the IIC measure $\widetilde{\nu}$ for first-passage percolation is the distribution of the product $(\sigma_e s_e)$, where $(\sigma_e)$ is sampled from Kesten's IIC measure $\nu$, and $(s_e)$ is an independent family of random variables satisfying $\mathbb{P}(s_e \in A) = \mathbb{P}(t_e \in A \mid t_e > 0)$ for Borel sets $A$. Equivalently, it is the distribution of weights constructed as follows. First we sample a configuration $(\sigma_e)$ from $\nu$. Given $(\sigma_e)$, the weights at different edges $e$ are independent, equaling zero with probability one if $\sigma_e=0$, and having Radon-Nikodym derivative equal to $\mathbf{1}_{(0,\infty)}/\mathbb{P}(t_e > 0)$ relative to the original distribution of $(t_e)$ if $\sigma_e=1$. 

The simple form (and proof) of Lem.~\ref{lem: fpp_iic} is a direct result of essentially conditioning that $\rho = 0$. This turns the question of the existence of the limit into a Bernoulli percolation problem, which was solved by Kesten. Our main result, Thm.~\ref{thm: main_result}, instead conditions that $\rho < \infty$, and this event, while containing the event $\{\rho = 0\}$, is significantly more complicated. In particular, the former event contains many weight configurations in which optimal paths between distant points take a large number of nonzero edge weights. For this reason, it is not even a priori clear that the limit in Thm.~\ref{thm: main_result} should be related to the IIC, and we will need to use the machinery developed for critical FPP in recent years to give the proof.

%To state our result, let $(X_e)_{e \in \mathbb{E}^2}$ be a family of random variables whose joint distribution is $\nu$, let $(Y_e)_{e \in \mathbb{E}^2}$ be an i.i.d.~collection satisfying
%\begin{equation}\label{eq: Y_def}
%\mathbb{P}(Y_e \in A) = \mathbb{P}(t_e \in A \mid t_e > 0) \text{ for Borel }A \subset \mathbb{R}
%\end{equation}
%and define the product $Z_e = X_eY_e$. The configuration $(Z_e)$ could alternatively be generated by sampling a configuration according to the IIC $\nu$, and then replacing each nonzero weight by an independent copy of $t_e$ conditional that $t_e$ is positive. 

%Our main result states that the distribution of $(t_e)$ given $T(0,\partial B(n)) \leq L$ converges to $\widetilde{\nu}$ as $n \to \infty$. Whereas the definition of our IIC measure $\widetilde{\nu}$ above in Lem.~\ref{lem: fpp_iic} essentially conditions that $\rho = 0$, the theorem below conditions that $\rho < \infty$. 
\begin{Th}\label{thm: main_result}
Suppose that \eqref{eq: percolation_condition} holds and $\sum_{k=2}^\infty a_k = \infty$. Let $E \subset [0,\infty)^{\mathbb{E}^2}$ be a cylinder event. For any $L \geq 0$,
\[
\lim_{n \to \infty} \mathbb{P}((t_e) \in E \mid T(0,\partial B(n)) \leq L) = \widetilde{\nu}(E).
\]
\end{Th}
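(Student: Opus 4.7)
The strategy is to stratify the conditional measure by the number of positive-weight ``bottleneck'' edges on an optimal path, and show each stratum contributes the same IIC limit $\widetilde{\nu}(E)$ identified in Lemma~\ref{lem: fpp_iic}.

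Let $\sigma_e := \mathbf{1}_{\{t_e > 0\}}$ and let $s_e$ have the conditional law of $t_e$ given $t_e > 0$; under the product structure, the families $(\sigma_e)$ and $(s_e)$ are independent. Define
\begin{equation*}
N(\sigma) := \min\bigl\{|\{e \in \gamma : \sigma_e = 1\}| : \gamma \text{ a path from } 0 \text{ to } \partial B(n)\bigr\},
\end{equation*}
which is a function of $\sigma$ only, with $\{T(0,\partial B(n)) = 0\} = \{N(\sigma) = 0\}$. Writing $A_n^L := \{T(0,\partial B(n)) \leq L\}$, decompose
\begin{equation*}
\mathbb{P}((t_e) \in E \mid A_n^L) = \sum_{k \geq 0} \mathbb{P}(E \mid N(\sigma) = k,\, A_n^L)\, \mathbb{P}(N(\sigma) = k \mid A_n^L).
\end{equation*}
The $k = 0$ term equals $\mathbb{P}(E \mid T(0,\partial B(n)) = 0)$, which converges to $\widetilde{\nu}(E)$ directly by Lemma~\ref{lem: fpp_iic}. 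The remaining task is to show each $k \geq 1$ stratum produces the same limit, and that the conditional distribution of $N(\sigma)$ under $\mathbb{P}(\cdot \mid A_n^L)$ is tight, so that the weighted sum also converges to $\widetilde{\nu}(E)$.

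The key tool for both tasks is Lemma~\ref{lem: resampling}, applied to the dyadic-annulus crossing times. Partition $B(n)\setminus\{0\}$ into $\text{Ann}_j := B(2^{j+1})\setminus B(2^j)$ for $j = 0,1,\dots, \lfloor \log_2 n\rfloor - 1$, and let $\tau_j$ be the minimal passage time across $\text{Ann}_j$. These are mutually independent (edge-disjoint annuli), satisfy $\mathbb{E}\tau_j \asymp a_j$ by the estimate displayed below~\eqref{eq: rho_characterization}, and concatenation of crossings gives $T(0,\partial B(n)) \geq \sum_j \tau_j$. Under $\sum_j a_j = \infty$, the sequence $(\tau_j)$ satisfies the hypotheses of Lemma~\ref{lem: resampling}, so conditioning on $A_n^L$ forces $\mathbb{P}(\tau_j \geq \delta \mid A_n^L) \to 0$ for every fixed $j$ and $\delta>0$. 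This has two consequences: (i) tightness of $N(\sigma) \mid A_n^L$, since a large $N$ would require too many annuli to contribute nonzero crossing weight against a fixed budget $L$; and (ii) the locations of the $k$ bottleneck edges in the stratum $\{N(\sigma) = k\}$ lie at Euclidean distance $\asymp n$ from $0$ with conditional probability $\to 1$, hence outside the finite support $F$ of $E$. A Kesten-type spatial Markov/scaling argument for critical 2D Bernoulli percolation then identifies the induced marginal on $F$ with the IIC marginal $\nu|_F$, while the weights $s_e$ for $e \in F$ with $\sigma_e = 1$ retain their original conditional law, matching the structure of $\widetilde{\nu}$ described after Lemma~\ref{lem: fpp_iic}.

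\textbf{Main obstacle.} The most delicate step is the last identification: showing that the $\sigma$-marginal on $F$ in the stratum $\{N(\sigma) = k\}$ converges, under the conditioning $A_n^L$, to the IIC marginal $\nu|_F$. One must simultaneously (a) use arm-exponent/RSW estimates to decouple the event ``$k$ bottleneck positive edges exist at scale $\sim n$'' from the $\sigma$-field near $F$, so that the local picture sees only a one-arm zero-connection event analogous to the Kesten setup, and (b) run Lemma~\ref{lem: resampling} uniformly across scales to control the bottleneck-weight constraint $\sum_{i=1}^k s_{e_i} \leq L$ without perturbing the configuration on $F$. Combining these decouplings uniformly in $k$ within the tight range is where the bulk of the technical work lies.
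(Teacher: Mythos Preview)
Your stratification by $N(\sigma)$ is an interesting idea, but there are genuine gaps, and the paper takes a different route. The most concrete problem is your application of Lemma~\ref{lem: resampling}: that lemma bounds $\mathbb{P}(X_1 \geq \delta \mid \sum_k X_k \leq L)$, i.e.\ the conditioning is on the \emph{sum} of the independent variables. You want $\mathbb{P}(\tau_j \geq \delta \mid A_n^L)$ with $A_n^L = \{T(0,\partial B(n)) \leq L\}$, but you only have the inequality $T(0,\partial B(n)) \geq \sum_j \tau_j$, so $A_n^L \subsetneq \{\sum_j \tau_j \leq L\}$, and $A_n^L$ is not measurable with respect to the $\tau_j$'s alone (all $\tau_j=0$ does not force $T(0,\partial B(n))=0$, since the annulus crossings need not link up). Hence the lemma does not apply as stated, and no obvious comparison of $\mathbb{P}(A_n^L)$ with $\mathbb{P}(\sum_j \tau_j \leq L)$ rescues it. The paper resolves this by conditioning on outermost/innermost zero-weight circuits $\mathcal{C}_i^\pm$ in well-separated annuli; on that event one gets an \emph{exact} equality $T(0,\partial B(n)) = \sum_i T_i^\pm + R_n$ (equation~\eqref{eq: T_decomposition}), and conditionally on the circuits the summands are genuinely independent (Lemma~\ref{lem: independence}), so Lemma~\ref{lem: resampling} applies cleanly. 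A second gap: the lemma's denominator involves the \emph{truncated} expectations $\mathbb{E}X_k \mathbf{1}_{\{X_k \leq \delta'\}}$, and your appeal to $\mathbb{E}T(0,\partial B(n)) \asymp \sum a_k$ gives no control on the truncation. The paper needs a separate percolation construction (Lemma~\ref{lem: percolation_lemma}), showing $\mathbb{P}\bigl(F^{-1}(\tfrac12 + 2^{-Ck}) \leq T_i^+ \leq F^{-1}(\tfrac12 + 2^{-ck})\bigr) \geq c$, precisely to lower-bound the truncated sum.

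Your tightness claim for $N(\sigma)$ under $A_n^L$ is also suspect. When the positive part of $t_e$ has no gap at $0$, large $N(\sigma)$ is entirely compatible with $T \leq L$ via many tiny positive weights; your annulus argument would have to rule this out, and you have not. The paper's theorem says the \emph{local} picture near the origin converges to the IIC, but it makes no claim that the \emph{global} number of positive edges on an optimal path to $\partial B(n)$ stays bounded under the conditioning, and I see no reason it should. The paper sidesteps stratification entirely: in Step~1 it reduces the theorem to showing $\mathbb{P}(T(0,\partial B(K)) \geq F^{-1}(\eta) \mid A_n^L) \to 0$ for each fixed $K$ and $\eta>1/2$, then uses the outermost zero-weight circuit in $\mathrm{Ann}(\sqrt{K},K)$ together with J\'arai's extension of Kesten's IIC (equation~\eqref{eq: extended_kesten}) to identify the limit. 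This reduction-to-a-local-statement, followed by the exact circuit decomposition, is the organizing principle your outline is missing.
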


Our proof of Thm.~\ref{thm: main_result} will be broken into three steps. In Sec.~\ref{sec: flibbertygibbet}, we reduce the problem, showing that it suffices to consider cylinders $E$ of the form $\{T(0,\partial B(K)) \geq \delta\}$ for fixed $K,\delta>0$. In Sec.~\ref{sec: circuit_construction}, we use a a circuit construction to conditionally decompose $T(0,\partial B(n))$ into a sum of independent variables. In Sec.~\ref{sec: resampling_section}, we prove Lem.~\ref{lem: resampling}, our main tool about nonnegative independent random variables conditioned to have small sum, and apply it to the decomposition of $T(0,\partial B(n))$.

\begin{Rem}
The proof of Thm.~\ref{thm: main_result} does not use any particular properties of the square lattice and is valid on a wide range of planar graphs including site and edge FPP on the triangular, square, and hexagonal lattices. The main requirement is that Kesten's IIC measure exists, and this was proved in \cite{kesteniic} for a general class of periodic graphs $G$ imbedded in the plane with corresponding matching graph $G^\ast$. See \cite[Sec.~2.1, 2.2]{kestenbook} for a discussion of such graphs.
\end{Rem}

We finish this section with the proof of Lem.~\ref{lem: fpp_iic}.
\begin{proof}
We represent the weights $t_e$ as a product of independent random variables $t_e = t_e^Bs_e$, where $(t_e^B) = (\mathbf{1}_{\{t_e > 0\}})$ is an i.i.d.~collection of Bernoulli$(1/2)$ random variables, and $(s_e)$ is an i.i.d.	~collection of variables satisfying $\mathbb{P}(s_e \in A) = \mathbb{P}(t_e \in A \mid t_e>0)$ for Borel sets $A$. Consider an event $D$ of the form $D = D_1 \times D_2$, where $D_i \in [0,\infty)^{\mathbb{E}^2}$ are cylinder events for $i=1,2$. Then
%\[
%E_1 = \{t_{e_1}^B = a_1, \dots, t_{e_k}^B = a_k\}, \text{ and }E_2 = \{ s_{e_1} \in A_1, \dots, s_{e_k} \in A_k\}
%\]
%for distinct edges $e_i$, numbers $a_i \in \{0,1\}$, and Borel sets $A_i$. Observing that $T(0,\partial B(n)) = 0$ if and only if $T^B(0,\partial B(n))=0$, we have by independence
\begin{align}
\mathbb{P}(((t_e^B),(s_e)) \in D \mid T(0,\partial B(n)) = 0) &= \mathbb{P}((t_e^B) \in D_1, (s_e) \in D_2 \mid T^B(0,\partial B(n))=0) \nonumber \\
&= \mathbb{E}\left( \mathbb{P}\left( (t_e^B) \in D_1,(s_e) \in D_2 \mid (t_e^B)\right) \mid T^B(0, \partial B(n))=0\right) \nonumber \\
&= \mathbb{P}((s_e) \in D_2) \mathbb{P}((t_e^B) \in D_1 \mid T^B(0,\partial B(n))=0) \label{eq: to_generalize} \\
&\to \nu(D_1) \mathbb{P}((s_e) \in D_2)  \text{ as } n \to \infty. \nonumber
\end{align}
Therefore the pair $((t_e^B),(s_e))$ under $\mathbb{P}(\cdot \mid T(0,\partial B(n)) =0)$ converges in distribution to $((\sigma_e),(s_e))$, where $(\sigma_e)$ is an independent configuration sampled from the IIC measure $\nu$. Because the mapping $((t_e^B),(s_e)) \mapsto (t_e^Bs_e)$ is continuous in the product topology, the continuous mapping theorem gives that the distribution of $(t_e)$ under $\mathbb{P}(\cdot \mid T(0,\partial B(n)) =0)$ converges to that of $(\sigma_e s_e)$. Thus the limit defining $\widetilde{\nu}(E)$ exists and equals $\mathbb{P}((\sigma_e s_e) \in E)$. Taking $D_2$ to be the sure event shows that $\widetilde{\nu}(D_1) = \nu(D_1)$, implying the second statement of the lemma.
\end{proof}

\section{Proof of Thm.~\ref{thm: main_result}}\label{sec: main_proof}

For the remainder, we let $(t_e)$ be such that \eqref{eq: percolation_condition} holds and $\sum_{k=2}^\infty a_k = \infty$. The proof of Thm.~\ref{thm: main_result} will be broken into a few steps. 

\subsection{Step 1: problem reduction}\label{sec: flibbertygibbet}
First we give a simpler condition that implies the theorem. For any $L \geq 0$, integer $K \geq 0$, and $\eta>1/2$, suppose that we can prove that
\begin{equation}\label{eq: simpler_condition}
\lim_{n \to \infty} \mathbb{P}\left(T(0,\partial B(K)) \geq F^{-1}(\eta) \mid T(0,\partial B(n)) \leq L\right) = 0.
\end{equation}
In the case that $\lim_{\eta \downarrow 1/2} F^{-1}(\eta) = 0$ (there is no ``gap'' between 0 and the rest of the support of $t_e$), this statement is equivalent to $\lim_{n \to \infty} \mathbb{P}(T(0,\partial B(K)) \geq \delta \mid T(0,\partial B(n)) \leq L) = 0$ for all $\delta>0$. The above formulation is more convenient for simultaneously dealing with the case that $\lim_{\eta \downarrow 1/2} F^{-1}(\eta) > 0$. 

We will now argue that \eqref{eq: simpler_condition} implies the result of Thm.~\ref{thm: main_result}, so suppose that \eqref{eq: simpler_condition} holds. Let $E \subset [0,\infty)^{\mathbb{E}^2}$ be a cylinder event, and let $N$ be such that $E$ depends on the weights for edges in $B(N)$. For $K_2 > K_1$, let $\mathsf{F}(K_1,K_2)$ be the event that the annulus $\text{Ann}(K_1,K_2) = B(K_2) \setminus B(K_1)$ contains a circuit (a path whose initial and final points coincide) around the origin with total weight zero. Then for $L \geq 0$ and $n \geq 1$,
\begin{align}
\mathbb{P}((t_e) \in E \mid T(0,\partial B(n)) \leq L) &= \mathbb{P}((t_e) \in E, \mathsf{F}(\sqrt{K},K)^c \mid T(0,\partial B(n)) \leq L) \label{eq: term_1_breakdown}\\
&+ \mathbb{P}((t_e) \in E, \mathsf{F}(\sqrt{K},K) \mid T(0,\partial B(n)) \leq L) \nonumber.
\end{align}
The event $\mathsf{F}(\sqrt{K},K)^c$ is increasing in the weights $(t_e)$, whereas $\{T(0,\partial B(n)) \leq L\}$ is decreasing, so the Harris inequality \cite{H60} gives
\[
\text{RHS of }\eqref{eq: term_1_breakdown} \leq \mathbb{P}(\mathsf{F}(\sqrt{K},K)^c \mid T(0,\partial B(n)) \leq L) \leq \mathbb{P}(\mathsf{F}(\sqrt{K},K)^c).
\]
By \cite[(11.72)]{grimmettbook}, there exists $\Cl[smc]{c: F_constant}>0$ such that $\mathbb{P}(\mathsf{F}(\ell,2\ell)) \geq \Cr{c: F_constant}$ for all $\ell \geq 1$. By independence of the weights in disjoint annuli, there exists $\Cl[lgc]{c: new_F_constant_1}>0$ and $\Cl[smc]{c: new_F_constant_2}>0$ such that 
\begin{equation}\label{eq: F_bound}
\mathbb{P}(\mathsf{F}(\sqrt{K},K)^c) \leq \Cr{c: new_F_constant_1}K^{-\Cr{c: new_F_constant_2}}.
\end{equation}
Putting this in \eqref{eq: term_1_breakdown} gives
\begin{equation}\label{eq: put_a_circuit}
|\mathbb{P}((t_e) \in E \mid T(0,\partial B(n)) \leq L) - \mathbb{P}((t_e) \in E, \mathsf{F}(\sqrt{K},K) \mid T(0,\partial B(n)) \leq L)| \leq \Cr{c: new_F_constant_1} K^{-\Cr{c: new_F_constant_2}}.
\end{equation}

Any circuit $\mathcal{C}$ around the origin in $\text{Ann}(\sqrt{K},K)$, viewed as a plane curve, splits the plane into at least two connected components. One of these components, $\mathcal{R}(\mathcal{C})$, is unbounded. We say that $\mathcal{C}$ is the outermost zero-weight circuit around the origin in $\text{Ann}(\sqrt{K},K)$ if all edges $e$ on $\mathcal{C}$ have $t_e=0$ and $\mathcal{R}(\mathcal{C})$ is minimal. It is well-known (see \cite[p.~317]{grimmettbook}) that if there is a zero-weight circuit around the origin in $\text{Ann}(\sqrt{K},K)$, then there is a unique outermost vertex self-avoiding one. For any deterministic circuit $\mathcal{C}$, let $\mathsf{F}(\mathcal{C})$ be the event that $\mathcal{C}$ is the outermost vertex self-avoiding zero-weight circuit around the origin in $\text{Ann}(\sqrt{K},K)$. This event depends only on the weights for edges on $\mathcal{C}$ or in its exterior $\mathcal{R}(\mathcal{C})$. Then
\begin{align}
&\mathbb{P}((t_e) \in E, \mathsf{F}(\sqrt{K},K) \mid T(0,\partial B(n)) \leq L) \nonumber\\
=~& \sum_\mathcal{C} \mathbb{P}((t_e) \in E, \mathsf{F}(\mathcal{C}) \mid T(0,\partial B(n)) \leq L) \nonumber \\
=~& \sum_\mathcal{C} \mathbb{P}((t_e) \in E, \mathsf{F}(\mathcal{C}),T(0,\mathcal{C}) = 0 \mid T(0,\partial B(n)) \leq L) \nonumber \\
+& \sum_\mathcal{C} \mathbb{P}((t_e) \in E, \mathsf{F}(\mathcal{C}) , T(0,\mathcal{C}) > 0 \mid T(0,\partial B(n)) \leq L). \label{eq: to_put_assumption_in}
\end{align}
Here, $T(0,\mathcal{C})$ is defined as the minimal value of $T(\gamma)$ over all paths $\gamma$ that start at 0 and end at some vertex of $\mathcal{C}$. We now show that because of our assumption \eqref{eq: simpler_condition}, the term in \eqref{eq: to_put_assumption_in} is negligible.

For any $\eta>1/2$, \eqref{eq: to_put_assumption_in} is no larger than
\begin{align}
&\sum_{\mathcal{C}} \mathbb{P}(\mathsf{F}(\mathcal{C}), T(0,\mathcal{C}) \geq F^{-1}(\eta) \mid T(0,\partial B(n)) \leq L) \nonumber \\
+~& \sum_\mathcal{C} \mathbb{P}(\mathsf{F}(\mathcal{C}), T(0,\mathcal{C}) \in  (0,F^{-1}(\eta)) \mid T(0,\partial B(n)) \leq L) \nonumber \\
\leq~& \mathbb{P}(T(0,\partial B(K)) \geq F^{-1}(\eta) \mid T(0,\partial B(n)) \leq L) \nonumber \\
+~& \sum_{\mathcal{C}} \mathbb{P}(\mathsf{F}(\mathcal{C}), T(0,\mathcal{C}) \in (0,F^{-1}(\eta)) \mid T(0,\partial B(n)) \leq L). \label{eq: davids_term}
\end{align} 
On the event $\mathsf{F}(\mathcal{C})$, the circuit $\mathcal{C}$ has total weight zero, so $T(0,\partial B(n)) = T(0,\mathcal{C}) + T(\mathcal{C}, \partial B(n))$, assuming $n \geq K$. Furthermore, $\mathsf{F}(\mathcal{C})$ and $T(\mathcal{C},\partial B(n))$ depend on weights on $\mathcal{C}$ or in its exterior, whereas $T(0,\mathcal{C})$ depends on weights in the interior of $\mathcal{C}$, so they are independent. Using this, the summand in \eqref{eq: davids_term} is no larger than
\begin{align*}
&\frac{\mathbb{P}(\mathsf{F}(\mathcal{C}), T(0,\mathcal{C}) \in (0,F^{-1}(\eta)), T(\mathcal{C},\partial B(n)) \leq L)}{\mathbb{P}(\mathsf{F}(\mathcal{C}), T(0,\mathcal{C}) + T(\mathcal{C},\partial B(n)) \leq L)}  \mathbb{P}(\mathsf{F}(\mathcal{C}) \mid T(0,\partial B(n)) \leq L) \\
\leq~&\frac{\mathbb{P}(T(0,\mathcal{C}) \in (0,F^{-1}(\eta)))\mathbb{P}(\mathsf{F}(\mathcal{C}), T(\mathcal{C},\partial B(n)) \leq L)}{\mathbb{P}(T(0,\mathcal{C})=0, \mathsf{F}(\mathcal{C}), T(\mathcal{C},\partial B(n)) \leq L)}  \mathbb{P}(\mathsf{F}(\mathcal{C}) \mid T(0,\partial B(n)) \leq L) \\
=~&\frac{\mathbb{P}(T(0,\mathcal{C})\in (0,F^{-1}(\eta)))}{\mathbb{P}(T(0,\mathcal{C})=0)} \mathbb{P}(\mathsf{F}(\mathcal{C}) \mid T(0,\partial B(n)) \leq L) \\
\leq~& \frac{2(2K+1)^2 \left(\eta-\frac{1}{2}\right)}{\mathbb{P}(T(0,\partial B(K))=0)} \mathbb{P}(\mathsf{F}(\mathcal{C}) \mid T(0,\partial B(n)) \leq L) \\
\leq~& \frac{2(2K+1)^2 \left( \eta - \frac{1}{2}\right)}{2^{-2(2K+1)^2}} \mathbb{P}(\mathsf{F}(\mathcal{C}) \mid T(0,\partial B(n)) \leq L).
\end{align*}
In the second to last inequality we have used that if $T(0,\mathcal{C}) \in (0,F^{-1}(\eta))$, then there must be an edge $e$ in $B(K)$ with $t_e \in (0,F^{-1}(\eta))$. Placing this back in \eqref{eq: davids_term}, we obtain that \eqref{eq: to_put_assumption_in} is no larger than 
\[
 \mathbb{P}\left(T(0,\partial B(K)) \geq F^{-1}(\eta) \mid T(0,\partial B(n)) \leq L\right) + \frac{2(2K+1)^2\left(\eta-\frac{1}{2}\right)}{2^{-2(2K+1)^2}}.
 \]
This can be made small by first choosing $\eta$ close to $1/2$, and then taking $n \to \infty$, using our assumption \eqref{eq: simpler_condition}. In total, if we put this back into \eqref{eq: to_put_assumption_in}, we obtain for any $K\geq 1$ and $L \geq 0$
\begin{align}
\mathbb{P}((t_e) \in E, \mathsf{F}(\sqrt{K},K) \mid T(0,\partial B(n)) \leq L) - \sum_\mathcal{C} \mathbb{P}((t_e) \in E, \mathsf{F}(\mathcal{C}) &, T(0,\mathcal{C}) = 0 \mid T(0,\partial B(n)) \leq L)   \nonumber \\
&\to 0 \text{ as } n \to \infty. \label{eq: ingredient_2}
\end{align}

To deal with the right term in \eqref{eq: ingredient_2}, we take $K > N^2$. We again use that if $n \geq K$, $T(0,\partial B(n)) = T(0,\mathcal{C}) + T(\mathcal{C},\partial B(n))$ on $\mathsf{F}(\mathcal{C})$ to get
\begin{align}
&\sum_\mathcal{C} \mathbb{P}((t_e) \in E, \mathsf{F}(\mathcal{C}), T(0,\mathcal{C}) = 0 \mid T(0,\partial B(n)) \leq L)\nonumber \\
=~& \sum_\mathcal{C} \frac{\mathbb{P}((t_e) \in E, T(0,\mathcal{C})=0,  \mathsf{F}(\mathcal{C}),T(\mathcal{C},\partial B(n)) \leq L)}{\mathbb{P}(T(0,\partial B(n)) \leq L)} \nonumber \\
=~& \sum_\mathcal{C} \frac{\mathbb{P}((t_e) \in E, T(0,\mathcal{C})=0)\mathbb{P}(\mathsf{F}(\mathcal{C}), T(\mathcal{C},\partial B(n)) \leq L)}{\mathbb{P}(T(0,\partial B(n)) \leq L)}. \label{eq: im_on_a_plane}
\end{align}
In the second equality we have used independence in a similar way as before: $\mathsf{F}(\mathcal{C})$ and $\{T(\mathcal{C}, \partial B(n)) \leq L\}$ depend only on the weights on $\mathcal{C}$ and in its exterior, whereas $E$ and $\{T(0, \mathcal{C}) = 0\}$ depend only on the weights in the interior of $\mathcal{C}$. From \eqref{eq: put_a_circuit}, \eqref{eq: ingredient_2}, and \eqref{eq: im_on_a_plane}, we obtain the following statement: if $\epsilon>0$, for sufficiently large (fixed) $K \in (N^2,\infty)$ and all large $n$,
\begin{equation}\label{eq: to_use_later_my_man}
\left| \mathbb{P}((t_e) \in E \mid T(0,\partial B(n)) \leq L)   - \sum_\mathcal{C} \frac{\mathbb{P}((t_e) \in E, T(0,\mathcal{C})=0)\mathbb{P}(\mathsf{F}(\mathcal{C}), T(\mathcal{C},\partial B(n)) \leq L)}{\mathbb{P}(T(0,\partial B(n)) \leq L)}\right| \leq \epsilon.
\end{equation}

We next claim that if $K$ is large enough, then for any deterministic circuit $\mathcal{C}$ around the origin in $\text{Ann}(\sqrt{K},K)$,
\begin{equation}\label{eq: extended_kesten}
(1-\epsilon) \widetilde{\nu}(E) \leq \mathbb{P}((t_e) \in E \mid T(0,\mathcal{C})=0) \leq (1+\epsilon) \widetilde{\nu}(E).
\end{equation}
This follows from almost the same argument as that for Lem.~\ref{lem: fpp_iic}. Indeed, represent $t_e$ as a product of independent random variables $t_e = t_e^Bs_e$, where $t_e^B = \mathbf{1}_{\{t_e > 0\}}$ has Bernoulli$(1/2)$ distribution and $s_e$ satisfies $\mathbb{P}(s_e \in A) = \mathbb{P}(t_e \in A \mid t_e>0)$ for Borel sets $A$. If $D_1,D_2 \subset [0,\infty)^{\mathbb{E}^2}$ are cylinder events, then the same argument establishing \eqref{eq: to_generalize} gives
\begin{equation}\label{eq: hamburger_helper}
\mathbb{P}\left((t_e^B) \in D_1, (s_e) \in D_2 \mid T(0,\mathcal{C}) = 0\right) = \mathbb{P}((s_e) \in D_2) \mathbb{P}\left((t_e^B) \in D_1 \mid T^B(0,\mathcal{C})=0\right).
\end{equation}
By \cite[Eq.~(3.27)]{jarai03}, one has
\[
\lim_{m \to \infty \atop \mathcal{C} \text{ surrounds }B(m)} \mathbb{P}\left((t_e^B) \in D_1 \mid T^B(0,\mathcal{C})=0\right) = \nu(D_1).
\]
By \eqref{eq: hamburger_helper}, we have
\[
\lim_{m \to \infty \atop \mathcal{C} \text{ surrounds } B(m)} \mathbb{P}((t_e^B) \in D_1, (s_e) \in D_2 \mid T(0,\mathcal{C}) = 0)=  \mathbb{P}((s_e) \in D_2) \nu(D_1).
\]
Thus the pair $((t_e^B),(s_e))$ converges in distribution as $m \to \infty$ under the measure $\mathbb{P}(\cdot \mid T(0,\mathcal{C})=0)$ to that of $((\sigma_e),(s_e))$, where $(\sigma_e)$ has distribution $\nu$, and $(\sigma_e)$ and $(s_e)$ are independent. Again, by the continuous mapping theorem, $(t_e)$ converges in distribution as $m \to \infty$ under the measure $\mathbb{P}(\cdot \mid T(0,\mathcal{C})=0)$ to that of $(\sigma_e s_e)$. This establishes \eqref{eq: extended_kesten}.

Putting \eqref{eq: extended_kesten} into \eqref{eq: im_on_a_plane}, we find
\begin{align}
& (1-\epsilon) \widetilde{\nu}(E) \sum_\mathcal{C} \frac{\mathbb{P}(T(0,\mathcal{C})=0)\mathbb{P}(\mathsf{F}(\mathcal{C}), T(\mathcal{C},\partial B(n)) \leq L)}{\mathbb{P}(T(0,\partial B(n)) \leq L)} \nonumber \\
\leq~& \sum_\mathcal{C} \frac{\mathbb{P}((t_e) \in E, T(0,\mathcal{C})=0)\mathbb{P}(\mathsf{F}(\mathcal{C}), T(\mathcal{C},\partial B(n)) \leq L)}{\mathbb{P}(T(0,\partial B(n)) \leq L)} \nonumber \\
\leq~&(1+\epsilon) \widetilde{\nu}(E) \sum_\mathcal{C} \frac{\mathbb{P}(T(0,\mathcal{C})=0)\mathbb{P}(\mathsf{F}(\mathcal{C}), T(\mathcal{C},\partial B(n)) \leq L)}{\mathbb{P}(T(0,\partial B(n)) \leq L)}. \label{eq: last_to_do_my_friend}
\end{align}
Taking $E$ to be the sure event in \eqref{eq: to_use_later_my_man} produces
\[
1- \epsilon \leq \sum_\mathcal{C} \frac{\mathbb{P}(T(0,\mathcal{C})=0)\mathbb{P}(\mathsf{F}(\mathcal{C}), T(\mathcal{C},\partial B(n)) \leq L)}{\mathbb{P}(T(0,\partial B(n)) \leq L)} \leq 1+\epsilon.
\]
We put this in \eqref{eq: last_to_do_my_friend} to obtain
\[
(1-\epsilon)^2 \widetilde{\nu}(E) \leq \sum_\mathcal{C} \frac{\mathbb{P}((t_e) \in E, T(0,\mathcal{C})=0)\mathbb{P}(\mathsf{F}(\mathcal{C}), T(\mathcal{C},\partial B(n)) \leq L)}{\mathbb{P}(T(0,\partial B(n)) \leq L)} \leq (1+\epsilon)^2 \widetilde{\nu}(E).
\]
Last, returning to \eqref{eq: to_use_later_my_man}, we find for large $n$ that
\[
(1-\epsilon)^2 \widetilde{\nu}(E) - \epsilon \leq \mathbb{P}((t_e) \in E \mid T(0,\partial B(n)) \leq L) \leq (1+\epsilon)^2 \widetilde{\nu}(E)+ \epsilon.
\]
Since $\epsilon$ is arbitrary, we have now shown the statement of Thm.~\ref{thm: main_result}, assuming that \eqref{eq: simpler_condition} holds.

\subsection{Step 2: circuit construction and independence properties}\label{sec: circuit_construction}

We are left to verify \eqref{eq: simpler_condition}, so from now on, we fix $L \geq 0$, an integer $K \geq 0$, and $\eta>1/2$. In this step, we use a circuit construction to represent $T(0,\partial B(n))$ (conditionally) as a sum of independent random variables, so that in the next step, we can apply Lem.~\ref{lem: resampling}, our general result about the distribution of nonnegative random variables conditioned to have a small sum.

Let $R>0$ be an integer to be taken large at the end of the proof. We begin by defining events $(E_k)_{k \geq 1}$ as
\begin{align*}
E_k &= \mathsf{F}\left(R^{3k},R^{3k+1}\right) \cap \mathsf{F}\left( R^{3k+2},R^{3k+3}\right)\\
&=  \bigg\{ 
\begin{array}{c}
\text{Ann}\left(R^{3k},R^{3k+1}\right) \text{ and } \text{Ann}\left( R^{3k+2},R^{3k+3}\right) \text{ contain} \\
\text{circuits around the origin with total weight zero}
\end{array}
\bigg\}.
\end{align*}
Here, $\mathsf{F}$ is the event defined above \eqref{eq: term_1_breakdown}. 
%Because the $E_k$ depend on weights in disjoint annuli, they are independent, so by \cite[(11.72)]{grimmettbook},
By \eqref{eq: F_bound}, if $R$ is large enough, we have
\begin{equation}\label{eq: E_k_lower_bound}
\mathbb{P}(E_k) \geq 1 - 2\Cr{c: new_F_constant_1} R^{-\Cr{c: new_F_constant_2}} \geq \frac{1}{2},
\end{equation}
so the Borel-Cantelli lemma and independence imply that a.s., infinitely many $E_k$ occur. We enumerate (most of) the $k$ for which $E_k$ occurs as follows. Let $\kappa_1 = \min\{k \geq K : E_k \text{ occurs}\}.$ For $i \geq 1$, let
\[
\kappa_{i+1} = \min\{ k \geq \kappa_i + 1 : E_k \text{ occurs}\}.
\]

\begin{figure}
		\centering
		\includegraphics[width=0.8\linewidth, trim={0 0 0 15cm}, clip]{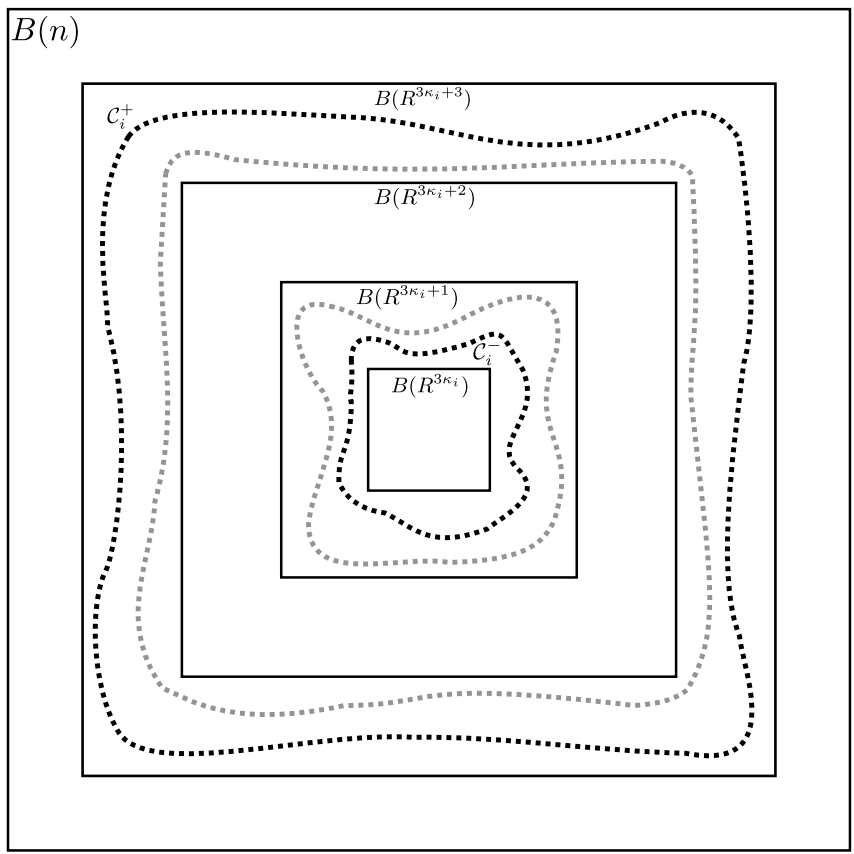}
		\caption{Depiction of the circuit construction in step 2. The annulus $\mathrm{Ann}(R^{3\kappa_i}, R^{3\kappa_i+3})$ is shown within the box $B(n)$, and it is split into four annuli. Because the event $E_{\kappa_i}$ occurs, we can find zero-weight circuits around the origin in the subannuli $\mathrm{Ann}(R^{3\kappa_i}, R^{3\kappa_i+1})$ and $\mathrm{Ann}(R^{3\kappa_i+2}, R^{3\kappa_i+3})$. The circuit $\mathcal{C}_i^-$ is the innermost in the first subannulus and $\mathcal{C}_i^+$ is the outermost in the second subannulus. They grey circuits are typical circuits that are neither innermost nor outermost and are drawn for reference.}
		\label{fig: circuits}
	\end{figure}
	
Next, we define circuits $\mathcal{C}^-_i$ and $\mathcal{C}^+_i$. Let $\mathcal{C}_1^- = \{0\}$ be the trivial circuit with no edges and only one vertex, the origin. For $i \geq 2$, let $\mathcal{C}_i^-$ be the innermost zero-weight circuit around the origin in $\text{Ann}\left( R^{3\kappa_i}, R^{3\kappa_i+1}\right)$. For $i \geq 1$, let $\mathcal{C}_i^+$ be the outermost zero-weight circuit around the origin in $\text{Ann}\left( R^{3\kappa_i+2}, R^{3\kappa_i+3}\right)$. For $n \geq 1$, there will be a largest such circuit that remains in $B(n)$: set $\mathcal{I} = \mathcal{I}(n)$ to be
\[
\mathcal{I} = \max\left\{ i \geq 1 : R^{3\kappa_i+3} \leq n \right\}.
\]
If the set is empty, we define $\mathcal{I}=0$. (See Fig.~\ref{fig: circuits}.) Because these circuits do not contain positive-weight edges, we have
\begin{equation}\label{eq: T_decomposition}
T(0,\partial B(n)) = T_1^- + T_1^+ + \dots + T_{\mathcal{I}}^- + T_{\mathcal{I}}^+ + R_n \text{ if } \mathcal{I} \geq 1,
\end{equation}
where $T_1^- = 0$, for $i \geq 2$, $T_i^- = T(\mathcal{C}_{i-1}^+, \mathcal{C}_i^-)$, and for $i \geq 1$, $T_i^+ = T(\mathcal{C}_i^-, \mathcal{C}_i^+)$. Last, the remainder term $R_n = T(\mathcal{C}_{\mathcal{I}}^+, \partial B(n))$.

%We observe that because the $E_k$ are independent, \eqref{eq: E_k_lower_bound} implies that they stochastically dominate i.i.d.~Bernoulli random variables with parameter $1/2$. The law of large numbers applied to the Bernoulli variables gives that a.s., $\liminf_{m \to \infty} \frac{1}{m} \sum_{k=1}^m \mathbf{1}_{E_k} \geq 1/2$. Therefore
%\begin{equation}\label{eq: I_bound}
%\mathbb{P}\left(\sum_{k=1}^m \mathbf{1}_{E_k} \geq \frac{1}{4} m  \text{ for all large } m\right) = 1.
%\end{equation}

We first show in \eqref{eq: independence} that, conditional on the circuits, the passage times $T_i^-,T_i^+,R_n$ are all independent. Furthermore, if $\kappa_1=K$, this conditioning gives no information about the region between circuits $\mathcal{C}_i^-$ and $\mathcal{C}_i^+$. Therefore (see \eqref{eq: free_percolation}) for deterministic $\mathsf{C}_i^-,\mathsf{C}_i^+$, this conditional distribution of $T_i^+$ when $\mathcal{C}_i^- = \mathsf{C}_i^-$ and $\mathcal{C}_i^+ = \mathsf{C}_i^+$ is the same as the (unconditional) distribution of $T(\mathsf{C}_i^-,\mathsf{C}_i^+)$. (This is true for $i \geq 2$ even when $\kappa_1> K$.)
\begin{Lemma}\label{lem: independence}
Let $I \geq 1$, $n \geq 1$, and $\mathsf{C}_1^-,\mathsf{C}_1^+, \dots, \mathsf{C}^-_I, \mathsf{C}_I^+$ be deterministic circuits such that $\mathsf{C}_1^+ \subset \mathrm{Ann}(R^{3K+2}, R^{3K+3})$ and $\mathbb{P}\left( \mathcal{I} = I, \cap_{i=1}^I \{\mathcal{C}_i^- = \mathsf{C}_i^-, \mathcal{C}_i^+ = \mathsf{C}_i^+\}\right) >0$. For any Borel sets $A,A_1^-, A_1^+, \dots, A_I^-, A_I^+ \subset \mathbb{R}$, we have
\begin{equation}\label{eq: independence}
\begin{split}
&\mathbb{P}\left( \cap_{i=1}^I \left\{ T_i^- \in A_i^-, T_i^+ \in A_i^+\right\}, R_n \in A \mid \mathcal{I} = I, \cap_{i=1}^I \{\mathcal{C}_i^- = \mathsf{C}_i^-, \mathcal{C}_i^+ = \mathsf{C}_i^+\} \right)  \\
=~& \prod_{i=1}^I \mathbb{P}\left( T_i^- \in A_i^- \mid \mathcal{I} = I, \cap_{i=1}^I \{\mathcal{C}_i^- = \mathsf{C}_i^-, \mathcal{C}_i^+ = \mathsf{C}_i^+\}\right)  \\
\times~& \prod_{i=1}^I \mathbb{P}\left( T_i^+ \in A_i^+ \mid \mathcal{I} = I, \cap_{i=1}^I \{\mathcal{C}_i^- = \mathsf{C}_i^-, \mathcal{C}_i^+ = \mathsf{C}_i^+\}\right) \\
\times~&  \mathbb{P}\left( R_n \in A \mid \mathcal{I} = I, \cap_{i=1}^I \{\mathcal{C}_i^- = \mathsf{C}_i^-, \mathcal{C}_i^+ = \mathsf{C}_i^+\}\right) 
\end{split}
\end{equation}
Furthermore, for each $i=1, \dots, I$,
\begin{equation}\label{eq: free_percolation}
\mathbb{P}\left( T_i^+ \in A_i^+ \mid \mathcal{I}=I, \cap _{i=1}^I \{\mathcal{C}_i^-=\mathsf{C}_i^-, \mathcal{C}_i^+ = \mathsf{C}_i^+\}\right) = \mathbb{P}(T(\mathsf{C}_i^-, \mathsf{C}_i^+) \in A_i^+)
\end{equation}
\end{Lemma}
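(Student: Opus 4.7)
The plan is to partition $\mathbb{E}^2$ into pairwise disjoint regions determined by the given circuits and exploit independence of the i.i.d.~weights across them. For each $i \in \{1,\ldots,I\}$ let $U_i$ denote the open region bounded between $\mathsf{C}_i^-$ and $\mathsf{C}_i^+$; for each $j \in \{2,\ldots,I\}$ let $V_j$ denote the open region between $\mathsf{C}_{j-1}^+$ and $\mathsf{C}_j^-$; and let $V_{I+1}$ be the open region strictly outside $\mathsf{C}_I^+$ within $B(n)$. These regions are pairwise disjoint.

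A planarity and zero-weight-rerouting argument first shows that $T_i^+$ is measurable with respect to the weights of edges in $U_i$, $T_j^-$ in $V_j$, and $R_n$ in $V_{I+1}$. The key point is that $\mathsf{C}_i^\pm$ consist of zero-weight edges under the conditioning, so any excursion of a candidate minimizing path for, say, $T_i^+$ outside $\overline{U_i}$ can be shortcut along the bounding circuit without increasing $T(\gamma)$; hence the infimum is realized by paths in $\overline{U_i}$.

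I would then show that the conditioning event $\{\mathcal{I}=I,\, \bigcap_i \{\mathcal{C}_i^\pm=\mathsf{C}_i^\pm\}\}$ is measurable with respect to the weights of edges in $\mathbb{E}^2 \setminus \bigcup_i U_i$. For $i \geq 2$, the event $\{\mathcal{C}_i^- = \mathsf{C}_i^-\}$ (innermost in $\mathrm{Ann}(R^{3\kappa_i}, R^{3\kappa_i+1})$) depends only on weights of edges on $\mathsf{C}_i^-$ and strictly inside $\mathsf{C}_i^-$ within that sub-annulus, all of which lie in $V_i$; analogously $\{\mathcal{C}_i^+ = \mathsf{C}_i^+\}$ depends only on edges in $V_{i+1}$. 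The condition $\{\mathcal{I}=I\}$ then amounts to $E_k^c$ for each $k \notin \{\kappa_1,\ldots,\kappa_I\}$ with $R^{3k+3}\leq n$, and each such $E_k^c$ is measurable in the ring $\mathrm{Ann}(R^{3k},R^{3k+3})$, which is contained in some $V_j$.

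Combining these two observations with independence of the i.i.d.~weights across disjoint edge sets, conditioning on the event does not alter the law of the weights in any $U_i$. This immediately yields \eqref{eq: free_percolation}, since $T_i^+$ is a function of the weights in $U_i$ alone. Together with the mutual disjointness of the $U_i$'s and $V_j$'s, it also yields the factorization \eqref{eq: independence}: each of the variables $T_i^-, T_i^+, R_n$ is measurable with respect to a distinct member of the disjoint family, and the conditional joint law of those families is a product of independent marginals. The main technical obstacle is the initial planarity argument localizing each passage time to its region; the degenerate case $i=1$ (where $\mathsf{C}_1^- = \{0\}$ and the conditioning implicitly forces $E_K$, which constrains the inner sub-annulus of $\mathrm{Ring}(K)$) requires additional care to confirm that the relevant conditioning region remains disjoint from $U_1$.
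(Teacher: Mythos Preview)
Your approach—partitioning the edge set into the disjoint annular regions $U_i$, $V_j$ determined by the fixed circuits and then invoking independence of the i.i.d.\ weights across these regions—is exactly what the paper does; the paper phrases the same decomposition as a list of conditions (1)--(3), each localized to one region, and factors accordingly. Your flag on the $i=1$ case is well-placed: the constraint $\kappa_1=K$ (implicit in $\mathsf{C}_1^+\subset\mathrm{Ann}(R^{3K+2},R^{3K+3})$) forces $\mathsf{F}(R^{3K},R^{3K+1})$, which genuinely lives inside $U_1$, so strictly speaking \eqref{eq: free_percolation} for $i=1$ should carry that extra conditioning—the paper's proof elides this, but the downstream application in Step~3 only needs a lower bound on $\mathbb{P}^\ast(T_1^+=0)$, which survives by FKG since both events are decreasing.
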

\begin{proof}
Fix $k_1, \dots, k_I$ such that for each $i=1, \dots, I$, the circuit $\mathsf{C}_i^+$ is contained in the annulus $\text{Ann}(R^{3k_i+2}, R^{3k_i+3})$. For $i = 2, \dots, I$, let $D_i^-(\mathsf{C}_i^-)$ be the event that $\mathsf{C}_i^-$ is the innermost zero-weight circuit around the origin in $\text{Ann}(R^{3k_i},R^{3k_i+1})$, and for $i = 1, \dots, I$, let $D_i^+(\mathsf{C}_i^+)$ be the event that $\mathsf{C}_i^+$ is the outermost zero-weight circuit around the origin in $\text{Ann}(R^{3k_i+2},R^{3k_i+3})$. 
%Last, let 
%\[
%F_1 = \bigcap_{k=\lceil \frac{1}{3}\log_3 K\rceil}^{k_1-1} E_k^c,
%\]
%\[
%\text{for }i = 2, \dots, I,\text{ let }F_i = \bigcap_{k=k_{i-1}+1}^{k_i-1}E_k^c,
%\]
%\[
%\text{and let }F_{I+1} = \bigcap_{k=k_I+1}^{\lfloor \frac{1}{3} \log_3 n\rfloor -1}E_k^c.
%\]

With this notation, the event 
\[
\left( \cap_{i=1}^I \left\{ T_i^- \in A_i^-, T_i^+ \in A_i^+\right\}\right) \cap \{R_n \in A\} \cap  \{\mathcal{I} = I\} \cap  \left( \cap_{i=1}^I \{\mathcal{C}_i^- = \mathsf{C}_i^-, \mathcal{C}_i^+ = \mathsf{C}_i^+\} \right)
\] 
occurs if and only if the following conditions hold:
\begin{enumerate}
%\item $T_1^- = T(0,\mathsf{C}_1^-) \in A_1^-$, $D_i^-(\mathsf{C}_i^-)$ occurs, and $E_k^c$ occurs for all $k$ with $K \leq k \leq k_1-1$,
\item for all $i = 2, \dots, I$, $T_i^- = T(\mathsf{C}_{i-1}^+, \mathsf{C}_i^-) \in A_i^-$, $D_{i-1}^+(\mathsf{C}_{i-1}^+) \cap D_i^-(\mathsf{C}_i^-)$ occurs, and $E_k^c$ occurs for all $k$ with $k_{i-1}+1 \leq k \leq k_i - 1$,
\item for all $i = 1, \dots, I$, $T_i^+ = T(\mathsf{C}_i^-, \mathsf{C}_i^+) \in A_i^+$, and
\item $R_n = T(\mathsf{C}_I^+, \partial B(n)) \in A$, $D_I^+(\mathsf{C}_I^+)$ occurs, and $E_k^c$ occurs for all $k$ with $k_I+1 \leq k \leq (1/3) \log_R n - 1$.
\end{enumerate}
The events described in (1)-(3) depend on weights for disjoint sets of edges, so they are independent.
% Indeed, the event in (1) depends on the edge-weights on $\mathsf{C}_1^-$ and in its interior. 
Indeed, for each $i$, the event in (1) depends on edge-weights on $\mathsf{C}_{i-1}^+ \cup \mathsf{C}_i^-$ and in the region between these circuits. For each $i$, the event in (2) depends on the edge-weights strictly between the circuits $\mathsf{C}_i^-$ and $\mathsf{C}_i^+$. The event in (3) depends on edge-weights on $\mathsf{C}_I^+$ and in the region between $\mathsf{C}_I^+$ and $\partial B(n)$. By independence, we obtain
\begin{align*}
&\mathbb{P}\left( \cap_{i=1}^I \left\{ T_i^- \in A_i^-, T_i^+ \in A_i^+\right\}, R_n \in A,  \mathcal{I} = I ,    \cap_{i=1}^I \{\mathcal{C}_i^- = \mathsf{C}_i^-, \mathcal{C}_i^+ = \mathsf{C}_i^+\} \right) \\
=~& \prod_{i=2}^I \mathbb{P}\left(T(\mathsf{C}_{i-1}^+,\mathsf{C}_i^-) \in A_i^-, D_{i-1}^+(\mathsf{C}_{i-1}^+), D_i^-(\mathsf{C}_i^-), \cap_{k=k_{i-1}+1}^{k_i-1} E_k^c\right) \\
\times~& \prod_{i=1}^I \mathbb{P}(T(\mathsf{C}_i^-, \mathsf{C}_i^+) \in A_i^+) \\
\times~& \mathbb{P}\left( T(\mathsf{C}_I^+, \partial B(n)) \in A, D_I^+(\mathsf{C}_I^+), \cap_{k=k_I+1}^{\lfloor \frac{1}{3} \log_R n\rfloor -1} E_k^c\right).
\end{align*}
Taking all of $A,A_i^-,A_i^+$ to be equal to $\mathbb{R}$ produces a similar expression for $\mathbb{P}(\mathcal{I}=I, \cap_{i=1}^I \{\mathcal{C}_i^- = \mathsf{C}_i^-, \mathcal{C}_i^+ = \mathsf{C}_i^+\})$. Dividing the two produces the formula
\begin{align}
&\mathbb{P}\left( \cap_{i=1}^I \left\{ T_i^- \in A_i^-, T_i^+ \in A_i^+\right\}, R_n \in A \mid \mathcal{I} = I, \cap_{i=1}^I \{\mathcal{C}_i^- = \mathsf{C}_i^-, \mathcal{C}_i^+ = \mathsf{C}_i^+\} \right)  \nonumber \\
%=~&\mathbb{P}\left( T(0,\mathsf{C}_1^-) \in A_1^- \mid D_i^-(\mathsf{C}_1^-), \cap_{k=K}^{k_1-1} E_k^c\right) \nonumber \\
=~&\prod_{i=2}^I \mathbb{P}\left(T(\mathsf{C}_{i-1}^+, \mathsf{C}_i^-) \in A_i^- \mid D_{i-1}^+(\mathsf{C}_{i-1}^+), D_i^-(\mathsf{C}_i^-), \cap_{k=k_{i-1}+1}^{k_i-1}E_k^c\right)  \nonumber \\
\times~&\prod_{i=1}^I \mathbb{P}(T(\mathsf{C}_i^-,\mathsf{C}_i^+) \in A_i^+)  \label{eq: formula_man}\\\times~&\mathbb{P}\left(T(\mathsf{C}_I^+,\partial B(n)) \in A \mid D_I^+(\mathsf{C}_I^+), \cap_{k=k_I+1}^{\lfloor \frac{1}{3} \log_R n\rfloor-1} E_k^c\right). \nonumber
\end{align}
Last, taking all Borel sets except for one (say $A_1^+$) equal to $\mathbb{R}$ shows that this expression equals the right side of \eqref{eq: independence} and that the factor in \eqref{eq: formula_man} is equal to the left side of \eqref{eq: free_percolation}.
\end{proof}

\subsection{Step 3: application of Lem.~\ref{lem: resampling}.}\label{sec: resampling_section}

In this step, we use the circuit construction of step 2, along with the Lem.~\ref{lem: resampling}, to prove the reduced claim \eqref{eq: simpler_condition}, and this will complete the proof of Thm.~\ref{thm: main_result}. As before, we fix $L \geq 0$, an integer $K \geq 0$, and $\eta>1/2$.

First we need to ensure that sufficiently many events $E_k$ occur inside the box $B(n)$. Let $\epsilon>0$. By \eqref{eq: E_k_lower_bound}, we can fix $R$ so large that $\mathbb{P}(E_k) \geq 1-\epsilon$ for all $k$. Next, for $m \geq K$, let $G_m$ be the event
\[
G_m = \left\{\sum_{k=K}^{m'} \mathbf{1}_{E_k} \geq \frac{m'}{4} \text{ for all } m' \geq m\right\}.
\]
Because the events $E_k$ depend on weights in disjoint annuli, they are independent. The variables $\mathbf{1}_{E_k}$ stochastically dominate a family of i.i.d.~Bernoulli random variables with parameter $1-\epsilon$. Therefore a.s., $\liminf_{m \to \infty}  (1/m)\sum_{k=1}^m \mathbf{1}_{E_k} \geq 1/2$, and so, given $\epsilon>0$, we can fix $m \geq K$ such that $\mathbb{P}(G_m) \geq 1-\epsilon$. Because $E_k$ is a decreasing event of the edge-weights (and therefore so is $G_m$), as is $\{T(0,\partial B(n)) \leq L\}$, the FKG inequality implies that
\begin{align*}
\mathbb{P}(E_K^c \cup G_m^c \mid T(0,\partial B(n)) \leq L) &\leq \mathbb{P}(E_K^c \mid T(0,\partial B(n)) \leq L) + \mathbb{P}(G_m^c \mid T(0,\partial B(n)) \leq L) \\
&\leq \mathbb{P}(E_K^c) + \mathbb{P}(G_m^c) \\
&\leq 2\epsilon.
\end{align*}
In what follows, we will think of $n$ as much larger than our fixed $K$ and $m$.

Proceeding to the left side of \eqref{eq: simpler_condition},
%\begin{equation}\label{eq: caprese_salad}
\begin{align}
&\mathbb{P}(T(0,\partial B(K)) \geq F^{-1}(\eta) \mid T(0,\partial B(n)) \leq L) \nonumber \\
\leq~& 2\epsilon +\mathbb{P}(T(0,\partial B(K)) \geq F^{-1}(\eta), E_K,G_m \mid T(0,\partial B(n)) \leq L). \label{eq: caprese_salad}
\end{align}
On the event $\{T(0,\partial B(K)) \geq F^{-1}(\eta), E_K, G_m\}$, we will condition on the values of the circuits $\mathcal{C}_i$ from the circuit decomposition. For this reason, we need to define our class of relevant collections of circuits. For our $R,L>0$, and our integers $m,n \geq 1$, let $\Xi = \Xi(m,n,R,L)$ be the collection of pairs $(I, \{\mathsf{C}_i^\pm\}_{i=1}^I)$, where $I \geq 1$ is an integer, and $\{\mathsf{C}_i^\pm\}_{i=1}^I$ are circuits satisfying the following conditions:
\begin{enumerate}
\item $\mathsf{C}_1^- = \{0\}$ and $\mathsf{C}_1^+ \subset \text{Ann}(R^{3K+2},R^{3K+3})$,
\item for $i =2, \dots, I$, $\mathsf{C}_i^-$ is contained in an annulus $\text{Ann}(R^{3k_i},R^{3k_i+1})$ and $\mathsf{C}_i^+$ is contained in $\text{Ann}(R^{3k_i+2},R^{3k_i+3})$, with $K < k_2 < k_3 < \dots < k_I$ and $B(R^{3k_I+3}) \subset B(n)$,
\item for all $m' \geq m$ such that $B(R^{3m'+3})$ is contained in $B(n)$, we have
\[
\sum_{k=K}^{m'} \mathbf{1}_{\{k = k_i \text{ for some }i = 1, \dots, I\}} \geq \frac{m'}{4},
\]
\item and $\mathbb{P}(\mathcal{I}=I, \cap_{i=1}^I \{\mathcal{C}_i^- = \mathsf{C}_i^-, \mathcal{C}_i^+ = \mathsf{C}_i^+\} \mid T(0,\partial B(n)) \leq L) > 0$.
\end{enumerate}
Condition 3 is similar to the definition of $G_m$, and all four conditions ensure that if $E_K \cap G_m$ occurs, then the (random) pair $(\mathcal{I}, \{\mathcal{C}_i^\pm\}_{i=1}^{\mathcal{I}})$ lies in $\Xi$. With this definition, the term in \eqref{eq: caprese_salad} is
\begin{align}
&\mathbb{P}(T(0,\partial B(K)) \geq F^{-1}(\eta), E_K,G_m \mid T(0,\partial B(n)) \leq L) \nonumber \\
\leq~& \mathbb{P}(T(0,\partial B(K)) \geq F^{-1}(\eta), (\mathcal{I}, \{\mathcal{C}_i^\pm\}) \in \Xi \mid T(0,\partial B(n)) \leq L) \nonumber \\
=~&\sum_{(I,\{\mathsf{C}_i^\pm\}) \in \Xi} \bigg[ \mathbb{P}(T(0,\partial B(K)) \geq F^{-1}(\eta) \mid T(0,\partial B(n)) \leq L, \mathcal{I}=I, \cap_{i=1}^I \{\mathcal{C}_i^- = \mathsf{C}_i^-, \mathcal{C}_i^+ = \mathsf{C}_i^+\}) \label{eq: pasta_alfredo} \\
&\hspace{1in} \times  \mathbb{P}\left(\mathcal{I}=I, \cap_{i=1}^I \{\mathcal{C}_i^- = \mathsf{C}_i^-, \mathcal{C}_i^+ = \mathsf{C}_i^+\} \mid T(0,\partial B(n)) \leq L \right) \bigg] \nonumber
\end{align}

To estimate \eqref{eq: pasta_alfredo}, we use the decomposition in \eqref{eq: T_decomposition} and observe that $T(0,\partial B(K)) \geq F^{-1}(\eta)$ implies that $T_1^+ \geq F^{-1}(\eta)$. Writing $\mathbb{P}^\ast$ for the conditional measure $\mathbb{P}(\cdot \mid \mathcal{I}=I, \cap_{i=1}^I \{\mathcal{C}_i^- = \mathsf{C}_i^-, \mathcal{C}_i^+ = \mathsf{C}_i^+\})$, the probability in \eqref{eq: pasta_alfredo} becomes 
\begin{align}
&\mathbb{P}^\ast(T(0,\partial B(K)) \geq F^{-1}(\eta) \mid T(0,\partial B(n)) \leq L) \nonumber \\
\leq~& \mathbb{P}^\ast(T_1^+ \geq F^{-1}(\eta) \mid T_1^++T_2^- + T_2^+ + \dots + T_I^- + T_I^+ + R_n \leq L). \label{eq: about_to_apply}
\end{align}
By Lem.~\ref{lem: independence}, the nonnegative variables $T_i^\pm$ and $R_n$ are all independent under $\mathbb{P}^\ast$. Therefore we can apply Lem.~\ref{lem: resampling} with $X_1 = T_1^+$ and $\delta = \delta' = F^{-1}(\eta)$ to obtain
\[
\eqref{eq: about_to_apply} \leq \frac{L}{\mathbb{P}^\ast(T_1^+ =0) \sum_{i=2}^I \mathbb{E}^\ast T_i^+ \mathbf{1}_{\{T_i^+ \leq F^{-1}(\eta)\}}}.
\]
Here, $\mathbb{E}^\ast$ represents expectation relative to $\mathbb{P}^\ast$, and we have removed the terms $\mathbb{E}^\ast T_i^- \mathbf{1}_{\{T_i^- \leq F^{-1}(\eta)\}}$ and $\mathbb{E}^\ast R_n \mathbf{1}_{\{T_n \leq F^{-1}(\eta)\}}$ from the denominator in Lem.~\ref{lem: resampling} because they are nonnegative. From \eqref{eq: free_percolation}, the distribution of $T_i^+$ under $\mathbb{P}^\ast$ is the same as that of $T(\mathsf{C}_i^-,\mathsf{C}_i^+)$ under $\mathbb{P}$, so the above becomes
\begin{align}
\eqref{eq: about_to_apply} &\leq \frac{L}{\mathbb{P}(T(0,\mathsf{C}_1^+) =0) \sum_{i=2}^I \mathbb{E} T(\mathsf{C}_i^-,\mathsf{C}_i^+) \mathbf{1}_{\{T(\mathsf{C}_i^-,\mathsf{C}_i^+) \leq F^{-1}(\eta)\}}} \nonumber \\
&\leq \frac{2^{R^{3K+3}} L}{\sum_{i=2}^I \mathbb{E} T(\mathsf{C}_i^-,\mathsf{C}_i^+) \mathbf{1}_{\{T(\mathsf{C}_i^-,\mathsf{C}_i^+) \leq F^{-1}(\eta)\}}}. \label{eq: final_stretch_maybe}
\end{align}

To complete the proof, we must estimate the summands in the denominator of \eqref{eq: final_stretch_maybe}, and for this, we need some constructions from Bernoulli percolation. In the following lemma, if $1/2 + 2^{-\Cl[smc]{c: construction_constant_1}k} \geq 1$, we interpret $F^{-1}(1/2 + 2^{-\Cr{c: construction_constant_1}k})$ to be $+\infty$.
\begin{Lemma}\label{lem: percolation_lemma}
There exist constants $\Cl[lgc]{c: construction_constant_2},\Cr{c: construction_constant_1}>0$ such that for any $k \geq 1$, and any deterministic pair of circuits $\mathfrak{C}_1, \mathfrak{C}_2$ around the origin in $\mathrm{Ann}(R^{3k},R^{3k+1})$ and $\mathrm{Ann}(R^{3k+2},R^{3k+3})$ respectively,
\[
\mathbb{P}\left( F^{-1}\left( \frac{1}{2} + \frac{1}{2^{\Cr{c: construction_constant_2}k}}\right) \leq T(\mathfrak{C}_1,\mathfrak{C}_2) \leq F^{-1}\left( \frac{1}{2} + \frac{1}{2^{\Cr{c: construction_constant_1}k}}\right) \right) \geq \Cr{c: construction_constant_1}.
\]
\end{Lemma}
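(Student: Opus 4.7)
My plan is to construct the event $\{a_{C_2 k} \leq T(\mathfrak{C}_1, \mathfrak{C}_2) \leq a_{c_1 k}\}$ explicitly via a joint near-critical Bernoulli percolation construction. I would couple the weights via i.i.d.\ uniforms $t_e = F^{-1}(U_e)$ so that the level sets $\{U_e \leq p\}$ define a monotone family of Bernoulli-$p$ percolations on $\mathbb{Z}^2$ with $\{t_e \leq a_j\} \supseteq \{U_e \leq 1/2 + 2^{-j}\}$. I would choose $c_1$ small and $C_2$ large, depending only on $\log R$, so that the correlation lengths of the Bernoulli percolations at levels $1/2 \pm 2^{-C_2 k}$ and $1/2 + 2^{-c_1 k}$ all exceed $R^{3k+2}$. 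At this scale, all crossing, circuit, and arm events involved lie in the critical-like regime, where RSW, FKG, and near-critical arm estimates give uniform lower bounds independent of $k$.

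In order, my steps would be as follows. First, I partition the annular region between $\mathfrak{C}_1$ and $\mathfrak{C}_2$ into three sub-annuli $I$, $M$, $O$ separated by circles at radii $R^{3k+1.3}$ and $R^{3k+1.7}$. Second, in $I$ and $O$, I apply RSW and FKG for critical Bernoulli percolation at $p = 1/2$ to construct zero-weight paths from $\mathfrak{C}_1$ to the inner boundary of $M$ and from the outer boundary of $M$ to $\mathfrak{C}_2$, each event having probability bounded below by a constant depending only on $R$. Third, in $M$, I construct the following joint event: (a) a dual circuit $D$ around the origin whose corresponding primal edges all have weight $\geq a_{C_2 k}$; (b) a distinguished edge $e^*$ on $D$ whose weight lies in the window $[a_{C_2 k}, a_{c_1 k}]$; and (c) zero-weight paths in $M$ connecting the two endpoints of $e^*$ to the inner and outer boundaries of $M$. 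On this joint event, (a) forces $T(\mathfrak{C}_1, \mathfrak{C}_2) \geq a_{C_2 k}$ since every primal path from $\mathfrak{C}_1$ to $\mathfrak{C}_2$ must cross $D$, while routing through $e^*$ via the zero-weight arms in (c) gives a path from $\mathfrak{C}_1$ to $\mathfrak{C}_2$ of total weight $t_{e^*} \leq a_{c_1 k}$.

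\textbf{Main obstacle.} The main obstacle is implementing the joint event (a)--(c) inside $M$: events (a) and (c) are monotone in opposite directions in the edge weights, so FKG does not directly yield a joint lower bound. The resolution is that (a) lives on the specific $1$-dimensional edge set $D$, while (c) can be routed through primal edges disjoint from $D$, giving independence once the location of $D$ is specified. The key quantitative step is a ``pivotal-plus-bridge'' argument for $e^*$: along a candidate dual circuit of length at least $R^{3k+1}$, each edge independently has weight in the target window with probability $\asymp 2^{-c_1 k}$, and each candidate bridge edge can be joined by zero-weight arms to either side of $M$ with probability controlled by near-critical arm exponents. Choosing $c_1$ small enough compared to $\log R$ makes the expected count of successful bridge configurations at least of order one, and combining this with the constructions in $I$ and $O$ via independence of the weights in disjoint annuli yields the desired lower bound $c_1 > 0$ on the joint event, completing the construction.
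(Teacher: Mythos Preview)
Your overall strategy---uniform coupling $t_e = F^{-1}(U_e)$, a distinguished edge $e^\ast$ with weight in the target window, $1/2$-open primal arms to both circuits, and a near-critical closed dual barrier through $(e^\ast)^\ast$---is exactly the construction the paper uses. The paper packages it as a single event $\mathfrak{O}_k$ rather than partitioning into three sub-annuli, but the geometric content is the same.

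There are, however, two places where your execution is incomplete relative to what the argument actually requires. First, the step ``expected count of successful bridge configurations at least of order one'' does not by itself give a lower bound on the \emph{probability} of the event. The paper closes this gap cleanly: with its choice of window $U_e \in (p_k, 2p_k - 1/2)$ and $(2p_k - 1/2)$-closed dual arms, at most one edge can satisfy all three conditions simultaneously (any second such edge would contradict the first edge's dual barrier), so the probability of $\mathfrak{O}_k$ equals the sum over edges, which in turn equals the expected count. Your wider window $[a_{C_2 k}, a_{c_1 k}]$ with $c_1$ chosen small destroys this uniqueness, and you would then need a second-moment argument you have not supplied. Second, the quantitative input you are reaching for is precisely Kesten's scaling relation $(R^{3k})^2 (p_k - 1/2)\,\pi_4(R^{3k}) \asymp 1$, where $p_k$ is chosen so that the correlation length $L(p_k) \asymp R^{3k}$; this is what makes the expected count of order one, and it should be named explicitly rather than absorbed into ``choosing $c_1$ small enough.''

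On the opposite-monotonicity issue, your proposed resolution (condition on the location of $D$, then use independence on the complement) is delicate because conditioning on $D$ being the \emph{actual} closed dual circuit imposes constraints on both sides. The paper instead uses the generalized FKG inequality together with arm-separation and near-critical stability (changing $(2p_k-1/2)$-closed arms to $1/2$-closed ones at negligible cost) to compare directly with the alternating four-arm probability $\pi_4(R^{3k})$. This is the standard and more robust route.
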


We first show how to complete the proof of Thm.~\ref{thm: main_result} given Lem.~\ref{lem: percolation_lemma}. Then we give the proof of the lemma. Applying it to the circuits $\mathsf{C}_i^\pm$ from \eqref{eq: final_stretch_maybe}, we obtain
\[
\mathbb{E}T(\mathsf{C}_i^-,\mathsf{C}_i^+) \mathbf{1}_{\{T(\mathsf{C}_i^-,\mathsf{C}_i^+) \leq F^{-1}(\eta)\}} \geq \Cr{c: construction_constant_1} F^{-1}\left( \frac{1}{2} + \frac{1}{2^{\Cr{c: construction_constant_2}k_i}}\right)
\]
so long as $1/2 + 2^{-\Cr{c: construction_constant_1}k_i} \leq \eta$. Returning to \eqref{eq: final_stretch_maybe}, we find
\[
\eqref{eq: about_to_apply} \leq \frac{2^{R^{3K+3}} L}{\Cr{c: construction_constant_1} \widetilde{\sum}_{i=2, \dots, I} F^{-1}\left( \frac{1}{2} + \frac{1}{2^{\Cr{c: construction_constant_2}k_i}}\right)}.
\]
Here, $\widetilde{\sum}$ is the summation restricted to the $i$ such that $2^{-\Cr{c: construction_constant_1}k_i} \leq \eta - 1/2$. Putting this back in \eqref{eq: pasta_alfredo} and then back in \eqref{eq: caprese_salad} produces
\begin{equation}\label{eq: to_slide_in_later}
\mathbb{P}(T(0,\partial B(K)) \geq F^{-1}(\eta) \mid T(0,\partial B(n)) \leq L) \leq 2 \epsilon + \max_{(I,\{\mathsf{C}_i^\pm\}) \in \Xi} \left[ \frac{2^{R^{3K+3}} L}{\Cr{c: construction_constant_1} \widetilde{\sum}_{i=2, \dots, I} F^{-1}\left( \frac{1}{2} + \frac{1}{2^{\Cr{c: construction_constant_2}k_i}}\right)} \right].
\end{equation}
The sum in the denominator equals
\begin{align*}
&\sum_{k=K}^\infty \mathbf{1}_{\{k = k_i \text{ for some }i=2, \dots, I\}} \mathbf{1}_{\{2^{-\Cr{c: construction_constant_1}k} \leq \eta - \frac{1}{2}\}} F^{-1}\left( \frac{1}{2} + \frac{1}{2^{\Cr{c: construction_constant_2}k}}\right) \\
\geq~&\sum_{k=K}^\infty \mathbf{1}_{\{k = k_i \text{ for some }i=2, \dots, I\}} \mathbf{1}_{\{2^{-\Cr{c: construction_constant_1}k} \leq \eta - \frac{1}{2}\}} a_{\lceil \Cr{c: construction_constant_2}k \rceil}.
\end{align*}
Here, we have brought back the notation $a_k = F^{-1}(1/2 + 1/2^k)$ and used monotonicity. For our fixed $\eta$, we can find $\Cl[lgc]{c: finish_it_up}>0$ such that the above is not smaller than
\begin{equation}\label{eq: near_end_lower_bound}
- \Cr{c: finish_it_up} + \sum_{k=K}^{\lfloor \frac{1}{3} \log_R n \rfloor -1} \mathbf{1}_{\{k = k_i \text{ for some }i=1, \dots, I\}} a_{\lceil \Cr{c: construction_constant_2}k \rceil}.
\end{equation}

To estimate the sum in \eqref{eq: near_end_lower_bound}, we use the following elementary lemma from \cite[Lem.~2.1]{DHHL21}. Suppose that $(\mathsf{a}_k), (\mathsf{b}_k),$ and $(\mathsf{c}_k)$ are nonnegative sequences such that $(\mathsf{b}_k)$ is nonincreasing. Writing $\mathsf{A}_j = \sum_{k=1}^j \mathsf{a}_k$ for $j \geq 1$ (and similarly for $\mathsf{C}_j$), suppose that $\mathsf{A}_j \leq \mathsf{C}_j$ for all $j \geq 1$. Then
\begin{equation}\label{eq: sum_by_parts}
\sum_{k=1}^j \mathsf{a}_k \mathsf{b}_k \leq \sum_{k=1}^j \mathsf{c}_k \mathsf{b}_k \text{ for all } j \geq 1.
\end{equation}
We put $\mathsf{c}_k = \mathsf{1}_{\{k = k_i \text{ for some } i = 1, \dots, I\}}$, $\mathsf{b}_k = a_{\lceil C_{200}k\rceil}$, and
\[
\mathsf{a}_k = \frac{1}{4} \mathbf{1}_{\{m \leq k \leq \lfloor \frac{1}{3} \log_R n\rfloor - 1\}},
\]
and apply the shifted version of \eqref{eq: sum_by_parts}, for sums starting from $k=K$ instead of from $k=1$. Because the pair $(I, \{\mathsf{C}_i^\pm\})$ is in $\Xi$, item (3) of the definition of $\Xi$ gives that $\mathsf{A}_j \leq \mathsf{C}_j$ for all $j$, and so the expression in \eqref{eq: near_end_lower_bound} is at least equal to
\[
-\Cr{c: finish_it_up} + \sum_{k=K}^{\lfloor \frac{1}{3} \log_R n \rfloor-1} \frac{1}{4} \mathbf{1}_{\{k \geq m\}} a_{\lceil \Cr{c: construction_constant_2}k\rceil}.
\]
Returning to \eqref{eq: to_slide_in_later},
\[
\mathbb{P}(T(0,\partial B(K)) \geq F^{-1}(\eta) \mid T(0,\partial B(n)) \leq L) \leq 2\epsilon + \frac{2^{R^{3K+3}}L}{\Cr{c: construction_constant_1}\left( -\Cr{c: finish_it_up} +  \frac{1}{4}\sum_{k=m}^{\lfloor \frac{1}{3} \log_R n \rfloor - 1} a_{\lceil \Cr{c: construction_constant_2}k\rceil}\right)}.
\]
We have assumed that $\sum_{k=2}^\infty a_k = \infty$, so as $n \to \infty$, the sum in the denominator diverges. Therefore
\[
\limsup_{n \to \infty} \mathbb{P}(T(0,\partial B(K)) \geq F^{-1}(\eta) \mid T(0,\partial B(n)) \leq L) \leq 2 \epsilon.
\]
Since $\epsilon$ is arbitrary, this shows \eqref{eq: simpler_condition} and completes the proof of Thm.~\ref{thm: main_result}, assuming that we also prove Lem.~\ref{lem: percolation_lemma}.

\begin{proof}[Proof of Lem.~\ref{lem: percolation_lemma}]
The argument for Lem.~\ref{lem: percolation_lemma} uses percolation constructions that have appeared in various places and are now standard. For this reason, we omit the details of the main estimate and only outline the idea.

To describe the strategy, we will need a few definitions. First, we represent our weights $t_e$ as images of uniform variables. Let $(U_e)_{e \in \mathbb{E}^2}$ be a collection of i.i.d.~Uniform$(0,1)$ random variables assigned to the edges, and (re)define $t_e = F^{-1}(U_e)$. Then the $t_e$ are i.i.d.~and have common distribution function $F$. For $p \in (0,1)$, say that an edge $e$ is $p$-open if $U_e \leq p$; otherwise, $e$ is $p$-closed. A central tool in the study of critical percolation is correlation length, and we take the definition from \cite[Eq.~1.21]{kestenscaling}. For $n_0 \geq 1$, and $p \in (1/2,1)$, let
\[
\sigma(n_0,p) = \mathbb{P}(\text{there is a }p\text{-open left-right crossing of } B(n_0)).
\]
The term ``$p$-open left-right crossing'' refers to a path in $B(n_0)$ whose initial vertex is in the left side $\{-n_0\} \times [-n_0,n_0]$, whose final vertex is in the right side $\{n_0\} \times [-n_0,n_0]$, and all of whose edges are $p$-open. It is known that for $p \in (1/2,1)$, $\sigma(n_0,p) \to 1$ as $n_0 \to \infty$. Accordingly, for $\epsilon>0$ and $p \in (1/2,1)$, we set
\[
L(p,\epsilon) = \min\{n_0 : \sigma(n_0,p) \geq 1-\epsilon\}.
\]
This $L(p,\epsilon)$ is called the (finite-size scaling) correlation length. It is known that $\lim_{p \downarrow 1/2} L(p,\epsilon) = \infty$ for $\epsilon>0$, and that there exists $\epsilon_1>0$ such that for all $\epsilon,\epsilon' \in (0,\epsilon_1)$, the ratio $L(p,\epsilon)/L(p,\epsilon')$ is bounded away from $0$ and $\infty$ as $p \downarrow 1/2$. We thus define $L(p) = L(p,\epsilon_1)$ for simplicity. For $k \geq 1$, define
\[
p_k = \min\{p \in (1/2,1) : L(p) \leq R^{3k}\}.
\]
As explained in \cite[Eq.~(2.5)]{DLW17}, there exist constants $\Cl[smc]{c: percolation_1},\Cl[lgc]{c: percolation_2}>0$ such that for all $k$,
\begin{equation}\label{eq: p_k_bounds}
\frac{1}{2} + R^{-\Cr{c: percolation_2}k} \leq p_k \leq \frac{1}{2} + R^{-\Cr{c: percolation_1}k}.
\end{equation}

In addition to the correlation length, we need to introduce the notion of duality. The dual lattice has vertex set $(\mathbb{Z}^2)^\ast = \left\{v + \left( \frac{1}{2}, \frac{1}{2}\right) : v \in \mathbb{Z}^2\right\}$, with edge set $(\mathbb{E}^2)^\ast$, consisting of those edges whose endpoints are Euclidean distance 1 from each other. A dual edge $e^\ast$ bisects a unique primal edge $e$ (edge from the original lattice). For $p \in (0,1)$, we say that a dual edge is $p$-open if its corresponding primal edge is $p$-open; otherwise, it is $p$-closed.

Given our deterministic circuits, $\mathfrak{C}_1,\mathfrak{C}_2$, we define an event with (uniformly) positive probability that guarantees that the passage time between the circuits is in the interval $(F^{-1}(1/2 + 2^{-\Cr{c: construction_constant_2}k}), F^{-1}(1/2 + 2^{-\Cr{c: construction_constant_1}k}))$ for suitable constants $\Cr{c: construction_constant_2},\Cr{c: construction_constant_1}$. Let $\mathfrak{O}_k$ be the event that there exists an edge $e$ in $\text{Ann}(R^{3k+1},R^{3k+2})$ such that all of the following hold:
\begin{enumerate}
\item $U_e \in (p_k, 2p_k - 1/2)$,
\item one endpoint of $e$ is connected by a $1/2$-open path to $\partial B(R^{3k})$ and the other is connected by an edge-disjoint $(1/2)$-open path to $\partial B(R^{3k+3})$, and
\item both endpoints of the dual edge $e^\ast$ are connected by a $(2p_k-1/2)$-closed dual path in $\mathrm{Ann}(R^{3k}, R^{3k+3})$ which surrounds the origin. That is, the union of this dual path with $e^\ast$ forms a dual circuit around the origin.
\end{enumerate}

\begin{figure}
		\centering
		\includegraphics[width=0.8\linewidth, trim={4cm 0 0 10cm}, clip]{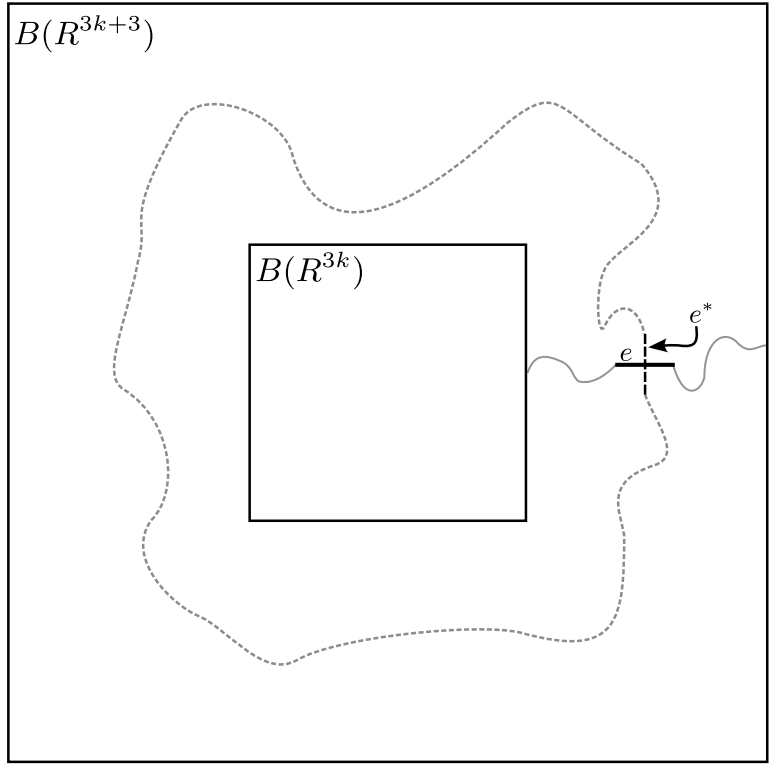}
		\caption{Depiction of the event $\mathfrak{D}_k$. The edge $e$, along with its dual edge, $e^\ast$, appears in the right side of the annulus $\mathrm{Ann}(R^{3k}, R^{3k+3})$. The two paths decribed in item 2 of the definition of $\mathfrak{D}_k$ are drawn solid, and the path from item 3 is drawn dotted.}
		\label{fig: construction}
	\end{figure}

(See Fig.~\ref{fig: construction}.) One can show that there exists $\Cl[smc]{c: percolation_3}>0$ such that for any $k \geq 1$, one has 
\begin{equation}\label{eq: gluing_estimate}
\mathbb{P}(\mathfrak{O}_k) \geq \Cr{c: percolation_3}.
\end{equation} 
This comes from a ``gluing'' argument using the Russo-Seymour-Welsh theorem and the generalized FKG inequality; see \cite[Prop.~2.3]{DH21} or the proof of Thm.~1.5 in \cite{DS11} for very similar proofs. The basic idea is as follows. For a given edge $e$, the probability that item (1) holds is $p_k-1/2$. The probability that items (2) and (3) hold (using the generalized FKG inequality and the RSW theorem) is of the same order as the ``alternating four-arm probability,'' written $\pi_4(R^{3k})$. This is the probability that the endpoints of $e$ are connected by two disjoint $1/2$-open paths to the boundary of a box of radius $R^{3k}$ centered at the midpoint of $e$, and the endpoints of $e^\ast$ are connected by two disjoint $1/2$-closed paths to the same boundary. In this estimate, one must use \cite[Lem.~6.3]{DSV09} to change the $(2p_k-1/2)$-closed dual paths from item (3) to $1/2$-closed ones. Because there can be at most one edge in $\text{Ann}(R^{3k+1},R^{3k+2})$ satisfying items (1)-(3), we get
\[
\mathbb{P}(\mathfrak{O}_k) = \sum_{e \in \text{Ann}(R^{3k+1},R^{3k+2})} \mathbb{P}(\text{(1)-(3) hold at }e),
\]
and the right side is comparable to $(R^{3k})^2 (p_k-1/2) \pi_4(R^{3k})$. By using the scaling relation $(R^{3k})^2 (p_k-1/2) \pi_4(R^{3k}) \geq \Cr{c: percolation_3}$ from \cite[Prop.~34]{kestenscaling}, we obtain \eqref{eq: gluing_estimate}.

Given the lower bound in \eqref{eq: gluing_estimate}, we are left to estimate the passage time $T(\mathfrak{C}_1,\mathfrak{C}_2)$ on the event $\mathfrak{O}_k$. So suppose that $\mathfrak{O}_k$ occurs. Form a path $\gamma_1$ connecting $\mathfrak{C}_1$ to $\mathfrak{C}_2$ by following one $(1/2)$-open path from item (2) from $\mathfrak{C}_1$ to the edge $e$, traversing $e$, then following the other $(1/2)$-open path from item (2) from $e$ to $\mathfrak{C}_2$. Each $(1/2)$-open edge has weight $\leq F^{-1}(1/2) =0$, so this path has total passage time $t_e \leq F^{-1}(2p_k-1/2)$. By \eqref{eq: p_k_bounds}, we have
\[
T(\mathfrak{C}_1, \mathfrak{C}_2) \leq T(\gamma_1) \leq F^{-1}\left( \frac{1}{2} + 2\left(p_k - \frac{1}{2}\right)\right) \leq F^{-1}\left( \frac{1}{2} + 2R^{-\Cr{c: percolation_1}k}\right).
\]
On the other hand, if $\gamma_2$ is any path connecting $\mathfrak{C}_1$ to $\mathfrak{C}_2$, by duality, it must contain either $e$ or an edge whose dual is in the dual path described in item (3). Again by \eqref{eq: p_k_bounds},
\[
T(\mathfrak{C}_1, \mathfrak{C}_2) = \inf_{\gamma_2 : \mathfrak{C}_1 \to \mathfrak{C}_2} T(\gamma_2) \geq F^{-1}(p_k) \geq F^{-1} \left( \frac{1}{2} + R^{-\Cr{c: percolation_2}k}\right).
\]
Combining these two inequalities, and adjusting the constants shows that for suitable $\Cl[lgc]{c: percolation_7},\Cl[smc]{c: percolation_8}>0$, on the event $\mathfrak{O}_k$, one has
\[
F^{-1}\left( \frac{1}{2} + \frac{1}{2^{\Cr{c: percolation_7}k}}\right) \leq T(\mathfrak{C}_1,\mathfrak{C}_2) \leq F^{-1}\left( \frac{1}{2} + \frac{1}{2^{\Cr{c: percolation_8}k}}\right).
\]
This completes the proof.
\end{proof}

%First, for our parameter $R>0$, let $\Xi_n$ be the collection of pairs $(I,\{\mathsf{C}_i^\pm\})$, where $I \geq (1/12)\log_R n$ is an integer and $\{\mathsf{C}_i^\pm\}_{i=1}^I$ are circuits, such that the following hold.
%\begin{enumerate}
%\item $\mathsf{C}_1^+ \subset \text{Ann}(R^{3K+2},R^{3K+3})$,
%\item $\mathbb{P}(\mathcal{I}=I, \cap_{i=1}^I \{ \mathcal{C}_i^- = \mathsf{C}_i^-, \mathcal{C}_i^+ = \mathsf{C}_i^+\}) > 0$.
%\end{enumerate}
%The second condition requires $n$ to be sufficiently large, so that $I \geq (1/12)\log_R n$ and $\kappa_1=K$ are possible. This definition is made so that if $\mathcal{I} \geq (1/12)\log_R n$ and $E_K$ occurs, then $(\mathcal{I}, \{\mathcal{C}_i^\pm\}) \in \Xi_n$. Let $\epsilon>0$. Because $T(0,\partial B(K) \geq \epsilon$ implies that $T_1^+ \geq \epsilon$, we obtain
%\begin{align*}
%&\mathbb{P}\left( T(0,\partial B(K)) \geq \epsilon \mid T(0,\partial B(n)) \leq L\right)  \\
%\leq~& \mathbb{P}\left( \mathcal{I} < \frac{1}{12} \log_R n\mid T(0,\partial B(n)) \leq L\right) + \mathbb{P}\left( E_K^c \mid T(0,\partial B(n)) \leq L\right) \\
%+~&\sum_{(I, \{\mathsf{C}_i^\pm\}) \in \Xi_n} \bigg[ \mathbb{P}\left( T_1^+ \geq \epsilon \mid T(0,\partial B(n)) \leq L, \mathcal{I}=I, \cap_{i=1}^I \{\mathcal{C}_i^- = \mathsf{C}_i^-, \mathcal{C}_i^+ = \mathsf{C}_i^+\}\right) \\
%&\hspace{1in} \times \mathbb{P}\left( T(0,\partial B(n)) \leq L \mid \mathcal{I}=I, \cap_{i=1}^I \{\mathcal{C}_i^- = \mathsf{C}_i^-, \mathcal{C}_i^+ = \mathsf{C}_i^+\}\right) \bigg]
%\end{align*}

\section{Lemma on nonnegative sums}\label{sec: nonnegative_sums}

In this section, we prove Lem.~\ref{lem: resampling}, which was used in the proof of the main result, Thm.~\ref{thm: main_result}. Given an independent sequence $(X_n)$ of nonnegative random variables, the lemma gives an upper bound on the probability that $X_1$ is positive given that the sum $X_1 + \dots + X_n$ is no bigger than a constant $L$. We are interested in the case when the sum diverges a.s., so the event in question is a type of extreme deviation event. 

In addition to being crucial for our main theorem, the lemma suggests an interesting problem.

\begin{Question}\label{question: sequence_question}
For which sequences of nonnegative independent random variables $(X_n)$ is the conditional distribution of $X_1$ (or the joint distribution of any finite number of variables) given that $X_1 + \dots + X_n \leq L$ trivial in the limit as $n \to \infty$? 
\end{Question}
We do not have a complete answer to this question. As we will discuss after the proof of Lem.~\ref{lem: resampling}, our inequality implies that the answer is yes (that is, the limiting distribution is trivial) for a large class of $(X_n)$: those for which \eqref{eq: sum_diverges_condition} holds. We will also give a few examples that show that the answer can be no for certain sequences $(X_n)$. These results suggest that if there is a necessary and sufficient condition for the distribution to be trivial, it might not have a concise form.

\begin{Lemma}\label{lem: resampling}
Let $X_1, X_2, \dots, X_n$ be independent, nonnegative random variables and set $S_n = X_1 + \dots + X_n$ for $n \geq 1$. For $L \geq 0$ such that $\mathbb{P}(S_n \leq L) > 0$ and any $\delta,\delta'$ with $0 < \delta' \leq \delta$, we have
\[
\mathbb{P}(X_1 \geq \delta \mid S_n \leq L) \leq \frac{L}{\mathbb{P}(X_1\leq \delta-\delta' ) \sum_{k=2}^n \mathbb{E}X_k\mathbf{1}_{\{X_k \leq \delta'\}}}.
\]
\end{Lemma}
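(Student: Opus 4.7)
The plan is a two-stage resampling argument combined with a Markov-type inequality. The structure of the target bound is suggestive: the factor $\mathbb{P}(X_1 \leq \delta - \delta')$ in the denominator hints at replacing a large value $X_1 \geq \delta$ by an independent small copy, while the $L$ in the numerator together with the weighted sum $\sum_{k \geq 2}\mathbb{E}[X_k\mathbf{1}_{\{X_k \leq \delta'\}}]$ hints at a Markov bound exploiting $S_n \leq L$.

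First, I would introduce an independent copy $X_1'$ of $X_1$, independent also of $X_2, \dots, X_n$, and observe that on the event $\{X_1 \geq \delta\} \cap \{S_n \leq L\} \cap \{X_1' \leq \delta - \delta'\}$ the resampled sum $\widetilde S := S_n - X_1 + X_1'$ satisfies $\widetilde S \leq L - \delta + (\delta - \delta') = L - \delta'$. Since $(X_1', X_2, \dots, X_n) \stackrel{d}{=} (X_1, X_2, \dots, X_n)$ and $X_1'$ is independent of $(X_1, \dots, X_n)$, taking expectations of the corresponding indicator inequality would give
\[
\mathbb{P}(X_1 \geq \delta, S_n \leq L)\, \mathbb{P}(X_1 \leq \delta - \delta') \leq \mathbb{P}(S_n \leq L - \delta').
\]

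Next, for each $k \geq 2$ I introduce an independent copy $X_k'$ of $X_k$, independent of $(X_1, \dots, X_n)$, and set $T_k = S_n - X_k$. On $\{X_k' \leq \delta'\} \cap \{S_n \leq L - \delta'\}$, nonnegativity of $X_k$ forces $T_k \leq L - \delta'$, hence $X_k' + T_k \leq L$. Multiplying the corresponding indicator inequality by $X_k'$, taking expectations, and applying $(X_k', T_k) \stackrel{d}{=} (X_k, T_k)$ yields
\[
\mathbb{E}[X_k\mathbf{1}_{\{X_k \leq \delta'\}}]\, \mathbb{P}(S_n \leq L - \delta') \leq \mathbb{E}[X_k\mathbf{1}_{\{X_k \leq \delta'\}}\mathbf{1}_{\{S_n \leq L\}}].
\]
Summing over $k = 2, \dots, n$ and using the pointwise bound $\sum_{k=2}^n X_k\mathbf{1}_{\{X_k \leq \delta'\}} \leq S_n \leq L$ on $\{S_n \leq L\}$ produces
\[
\mathbb{P}(S_n \leq L - \delta')\sum_{k=2}^n \mathbb{E}[X_k\mathbf{1}_{\{X_k \leq \delta'\}}] \leq L\,\mathbb{P}(S_n \leq L).
\]
Combining this with the first step and rearranging yields the lemma.

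The main conceptual step is identifying the two-stage resampling structure: Step 1 creates a deficit of $\delta'$ in the sum budget, which Step 2 then exploits to reinsert $X_k$-weighted mass on the still-allowed event $\{S_n \leq L\}$. Once this structure is recognized, both steps reduce to one-line pointwise indicator inequalities followed by Fubini, and I do not expect further obstacles beyond keeping the resampling bookkeeping straight.
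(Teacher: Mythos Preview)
Your proof is correct and uses essentially the same resampling idea as the paper. The only difference is organizational: the paper swaps $(X_1,X_k)$ for independent copies $(X_1',X_k')$ simultaneously in a single step, whereas you factor this into two sequential swaps passing through the intermediate event $\{S_n\le L-\delta'\}$; both routes finish with the same Markov-type bound $\sum_{k\ge 2} X_k\mathbf{1}_{\{X_k\le\delta'\}}\le L$ on $\{S_n\le L\}$.
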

\begin{proof}
We use a resampling argument. For $k \geq 2$, let $X_1',X_k'$ be independent random variables, independent of $X_1, \dots, X_n$, such that $X_1,X_1'$ have the same distribution, as do $X_k,X_k'$. Write
\[
S_n' = S_n - X_1 - X_k + X_1'+X_k'
\]
as the sum with $X_1,X_k$ replaced by $X_1',X_k'$. The conditions
\begin{enumerate}
\item $S_n' \leq L$ and $X_1' \geq \delta$, and
\item $X_1 \leq \delta - \delta'$ and $X_k \leq \delta'$
\end{enumerate}
together imply that
\[
S_n = S_n' - X_1'-X_k'+X_1+X_k \leq L-\delta -0+\delta-\delta' + \delta' = L.
\]
Combining this with independence yields for any $\delta'' \in [0,\delta']$
\[
\mathbb{P}(S_n \leq L, X_k \in [\delta'',\delta']) \geq \mathbb{P}(S_n' \leq L, X_1' \geq \delta) \mathbb{P}(X_1\leq \delta - \delta') \mathbb{P}(X_k \in [\delta'',\delta'])
\]
or, after dividing by $\mathbb{P}(S_n \leq L) = \mathbb{P}(S_n'\leq L)$,
\begin{align*}
\mathbb{P}(X_k \in [\delta'',\delta'] \mid S_n \leq L) &\geq \mathbb{P}(X_1' \geq \delta \mid S_n' \leq L) \mathbb{P}(X_1\leq \delta - \delta') \mathbb{P}(X_k \in [\delta'',\delta']) \\
&= \mathbb{P}(X_1 \geq \delta \mid S_n \leq L) \mathbb{P}(X_1\leq \delta - \delta') \mathbb{P}(X_k \in [\delta'',\delta']).
\end{align*}
If we integrate over $\delta'' \in [0,\delta']$ and sum from $k=2$ to $k=n$, we obtain
\[
L \geq \sum_{k=2}^n \mathbb{E}\left( X_k \mathbf{1}_{\{X_k \leq \delta'\}} \mid S_n \leq L\right) \geq \mathbb{P}(X_1 \geq \delta \mid S_n \leq L) \mathbb{P}(X_1\leq \delta - \delta') \sum_{k=2}^n \mathbb{E}X_k \mathbf{1}_{\{X_k \leq \delta'\}},
\]
which implies the result.
\end{proof}

\appendix

\section{Random series conditioned to be of constant order}
As promised earlier, we now provide examples of different limiting behavior related to Question~\ref{question: sequence_question}. We begin with the trivial limiting behavior that falls into the scope of Lem.~\ref{lem: resampling}. Let $X_1, X_2, \dots$ be an infinite sequence of independent, nonnegative random variables and for $k \geq 1$, let $I_k = \inf\{x \geq 0 : \mathbb{P}(X_k \leq x) > 0\}$ be the infimum of the support of the distribution of $X_k$. Suppose that
\begin{equation}\label{eq: sum_diverges_condition}
\text{for any }\delta'>0, \hspace{.25in} \sum_{k=1}^\infty \mathbb{E} X_k \mathbf{1}_{\{X_k \leq \delta'\}} = \infty \text{ and } \sum_{k=1}^\infty I_k <\infty. \hspace{.5in}
\end{equation}
(Of course, if $\sum_{k=1}^\infty I_k = \infty$, then the event $\{S_n \leq L\}$ is eventually empty.) Then, for any $L >  \sum_{k=1}^\infty I_k$ and $i \geq 1$, the conditional distribution of $(X_1, \dots, X_i)$ given the event $\{S_n \leq L\}$ converges to the (trivial) product measure $\prod_{j=1}^i \delta_{\{I_j\}}$ as $n \to \infty$. To see why, let $Z_1 = X_1 + \dots + X_i$, and $Z_k = X_{i+k-1}$ for $k \geq 2$. For any $\delta > 0$, Lemma~\ref{lem: resampling} applied to $(Z_k)$ gives
\[
\mathbb{P}(Z_1 \geq I_1 + \dots + I_i + \delta \mid S_n \leq L) \leq \frac{L}{\mathbb{P}\left(Z_1 \leq I_1 + \dots + I_i + \frac{\delta}{2}\right) \sum_{k=2}^n \mathbb{E}Z_k \mathbf{1}_{\left\{Z_k \leq \frac{\delta}{2}\right\}}},
\]
and the right side converges to $0$ as $n \to \infty$. Thus, when \eqref{eq: sum_diverges_condition} holds, the limiting distribution of $(X_n)$ is trivial. 

\begin{Rem}\label{rem: gap_remark}
It is possible to weaken the condition in \eqref{eq: sum_diverges_condition} and still get a trivial limit. For example, if there exists $\epsilon>0$ such that $\mathbb{P}(X_k \leq I_k) = \mathbb{P}(X_k \leq I_k + \epsilon)$ for all $k$---there is a ``gap'' of size $\epsilon$ in the support of the distributions---then \eqref{eq: sum_diverges_condition} need only hold for $\delta' = \epsilon$.
\end{Rem}

For the remainder of the section, we give examples that do not fall into the setting of Lem.~\ref{lem: resampling}.

\subsection{Parity examples}

First we present examples whose limiting distribution depends on $L$. Fix any $p \in (0,1)$ and let $(X_n)$ be an independent sequence satisfying
\[
\mathbb{P}(X_1 = 0) = p = 1-\mathbb{P}(X_1=1)
\]
and
\[
\mathbb{P}(X_n = 0) = p = 1-\mathbb{P}(X_n=2) \text{ for } n \geq 2.
\]
If $L$ is an positive odd integer, then for $n \geq 2$,
\[
\{S_n \leq L\} = \{X_2 + \dots + X_n \leq L\} = \{X_2 + \dots + X_n \leq L-1\},
\]
so
\begin{equation}\label{eq: parity_claim_1}
\mathbb{P}(X_1=1 \mid S_n \leq L) = \mathbb{P}(X_1=1 \mid X_2 + \dots + X_n \leq L-1) = \mathbb{P}(X_1=1) = 1-p.
\end{equation}

On the other hand, we will show that if $L$ is a positive even integer, then
\begin{equation}\label{eq: parity_claim}
\lim_{n \to \infty} \mathbb{P}(X_1 = 1 \mid S_n \leq L) = 0.
\end{equation}
To prove this, we observe if $L = 2k$, then $\mathbb{P}(X_1=1 \mid S_n \leq L)$ equals
%	$$ \lim_{n \rightarrow \infty } \mathbb{P}(X_1 = 1 | X_1 + X_2 + ... + X_n \leq L ) = 0 .$$  
\begin{align*}
%&\mathbb{P}(X_1 = 1 \mid S_n \leq L ) \\
& \dfrac{\mathbb{P}(X_1 = 1) \mathbb{P}( X_2 + \dots + X_n \leq L - 1  )}{\mathbb{P}(X_1 = 1) \mathbb{P}( X_2 + \dots + X_n \leq L - 1  ) +\mathbb{P}(X_1 = 0)  \mathbb{P}( X_2 + \dots + X_n \leq L   ) } \\
=~& \dfrac{\mathbb{P}(X_1 = 1) \mathbb{P}( X_2 + \dots + X_n \leq L - 2 )}{\mathbb{P}(X_1 = 1) \mathbb{P}( X_2 + \dots + X_n \leq L - 2  ) +\mathbb{P}(X_1 = 0)  \mathbb{P}( X_2 + \dots + X_n \leq L   ) } \\
  \leq~& \frac{1-p}{p} \cdot \dfrac{\mathbb{P}( X_2 + \dots + X_n \leq L-2 )}{ \mathbb{P}( X_2 + \dots + X_n = L  ) } \\
=~& \frac{1-p}{p} \cdot \dfrac{\sum_{i=0}^{k-1} { n \choose i } (1-p)^i p^{n-i} }{    { n \choose k } (1-p)^k p^{n-k}   } \to 0 \text{ as } n \to \infty.
\end{align*} 
	
%	$$ = \dfrac{\mathbb{P}(X_1 = 1) \mathbb{P}( X_2' + ... + X_n' \leq k - 1 )}{\mathbb{P}(X_1 = 1) \mathbb{P}( X_2' + ... + X_n'\leq k-1 ) +\mathbb{P}(X_1 = 0)  \mathbb{P}( X_2' + ... + X_n' \leq k  ) }  $$ 
%	$$ \leq \dfrac{\mathbb{P}(X_1 = 1) \mathbb{P}( X_2' + ... + X_n' \leq k - 1 )}{ \mathbb{P}(X_1 = 0)  \mathbb{P}( X_2' + ... + X_n' \leq k  ) }  $$ 
%	
%	$$ \leq \dfrac{\mathbb{P}(X_1 = 1) \mathbb{P}( X_2' + ... + X_n' \leq k - 1 )}{ \mathbb{P}(X_1 = 0)  \mathbb{P}( X_2' + ... + X_n' =k  ) }   $$ 
	
%	$$ \leq \dfrac{\mathbb{P}(X_1 = 1) \sum_{i=0}^{k-1} { n \choose i } (1-p)^i p^{n-i} }{ \mathbb{P}(X_1 = 0)   { n \choose k } (1-p)^k p^{n-k}   }  \rightarrow 0  $$ 

\subsection{General parity result}

Equations \eqref{eq: parity_claim_1} and \eqref{eq: parity_claim} show that in the parity example, the conditional distribution of $X_1$ tends to a different limit as $n \to \infty$ depending on whether $L$ is even or odd. One can modify this example in various ways. The general form that these modifications take will be analyzed in this section.

We now suppose that $X_1, X_2, \dots$ are independent nonnegative random variables such that
\[
X_2, X_3, \dots \text{ are i.i.d. with } p: = \mathbb{P}(X_k > 0) \in (0,1] \text{ for } k \geq 2.
\]
(If $p =0$ then $S_n = X_1$ for all $n \geq 2$ and the question becomes trivial.) To state the general result, we define random variables $\mathfrak{X}_2, \mathfrak{X}_3, \dots$ to be i.i.d., independent of $X_1, X_2, \dots$, and such that
\[
\mathbb{P}(\mathfrak{X}_k \in U) = \mathbb{P}(X_k \in U \mid X_k > 0) \text{ for } k \geq 2.
\]
Let
\[
\mathfrak{S}_n = \mathfrak{X}_2 + \dots + \mathfrak{X}_{n+1} \text{ for } n \geq 1
\]
and, for $L>0$, define the quantity
\[
K^\ast = \sup\{ k \geq 1 : \mathbb{P}(X_1 + \mathfrak{S}_k \leq L) > 0 \}.
\]
(We will assume that $L$ is large enough so that this set is nonempty, so $K^\ast$ is defined.) In the parity example from last section, $\mathfrak{X}_k \equiv 2$ for $k \geq 2$ and $K^\ast = \lfloor L/2 \rfloor$. Our last assumption is that 
\[
I_2 = 0,
\]
where $I_k$ was defined in the last section as the infimum of the support of $X_k$. In the absence of this assumption, the event $\{S_n \leq L\}$ is empty for large $n$.

\begin{Th}\label{thm: general_parity}
Suppose that $p \in (0,1]$, $I_2=0$, and $L$ is large enough that $\mathbb{P}( X_1 + \mathfrak{X}_2 \leq L)>0$. The following statements hold.
\begin{enumerate}
\item If $K^\ast = \infty$, then for every $\delta>0$,
\begin{equation}\label{eq: comprehensible_input}
\lim_{n \to \infty} \mathbb{P}(X_1 > I_1 + \delta \mid S_n \leq L) = 0.
\end{equation}
\item If $K^\ast < \infty$, then for every $\delta>0$,
\begin{equation}\label{eq: comprehensible_output}
\lim_{n \to \infty} \mathbb{P}(X_1 > I_1 + \delta \mid S_n \leq L) = \mathbb{P}(X_1 > I_1 + \delta \mid X_1 + \mathfrak{S}_{K^\ast} \leq L).
\end{equation}
\end{enumerate}
\end{Th}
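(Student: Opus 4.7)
The plan is to reduce both cases to a single explicit representation of $\mathbb{P}(S_n \leq L, X_1 \in A)$, obtained by conditioning on the number of positive terms among $X_2, \dots, X_n$. Let $N_n = \#\{2 \leq k \leq n : X_k > 0\}$, which is $\mathrm{Binomial}(n-1, p)$. Given $\{N_n = k\}$, the positive values among $X_2, \ldots, X_n$ are i.i.d.~copies of $\mathfrak{X}$ and hence their sum has the distribution of $\mathfrak{S}_k$, independent of $X_1$. This produces the identity
\[
\mathbb{P}(S_n \leq L,\, X_1 \in A) = \sum_{k=0}^{n-1} \binom{n-1}{k} p^k (1-p)^{n-1-k}\, \mathbb{P}(X_1 + \mathfrak{S}_k \leq L,\, X_1 \in A)
\]
valid for every Borel $A \subset \mathbb{R}$. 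Setting $A = (I_1+\delta,\infty)$ in the numerator and $A = \mathbb{R}$ in the denominator of the conditional probability gives the ratio I will analyze.

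For Case 2 ($K^\ast < \infty$), both sums truncate at $k = K^\ast$ by the definition of $K^\ast$. I would divide numerator and denominator by $\binom{n-1}{K^\ast} p^{K^\ast} (1-p)^{n-1-K^\ast}$; the remaining coefficients for $k < K^\ast$ are $O(n^{k-K^\ast})$ and vanish as $n \to \infty$. Thus only the $k = K^\ast$ term survives in the limit, yielding
\[
\lim_{n \to \infty} \mathbb{P}(X_1 > I_1 + \delta \mid S_n \leq L) = \frac{\mathbb{P}(X_1 + \mathfrak{S}_{K^\ast} \leq L,\, X_1 > I_1 + \delta)}{\mathbb{P}(X_1 + \mathfrak{S}_{K^\ast} \leq L)},
\]
which is \eqref{eq: comprehensible_output}; the denominator is positive by the choice of $K^\ast$.

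For Case 1 ($K^\ast = \infty$) I would instead invoke Lem.~\ref{lem: resampling} directly. First observe that $K^\ast = \infty$ forces $I_\mathfrak{X} := \inf \supp(\mathfrak{X}) = 0$, since otherwise $\mathfrak{S}_k \geq k I_\mathfrak{X}$ would force $\mathbb{P}(X_1 + \mathfrak{S}_k \leq L) = 0$ for large $k$. With $I_\mathfrak{X} = 0$ and $\mathfrak{X} > 0$ a.s., $\mathbb{P}(0 < \mathfrak{X} \leq \delta/2) > 0$, so $\mathbb{E}\, X_k \mathbf{1}_{\{X_k \leq \delta/2\}} = p\, \mathbb{E}\, \mathfrak{X}\, \mathbf{1}_{\{\mathfrak{X} \leq \delta/2\}}$ is a strictly positive constant for $k \geq 2$. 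Applying Lem.~\ref{lem: resampling} to the shifted nonnegative sequence $Y_1 := X_1 - I_1,\, X_2, X_3, \dots$ with $L$ replaced by $L - I_1$ and $\delta' = \delta/2$ produces a bound whose denominator grows linearly in $n$; hence the conditional probability is $O(1/n) \to 0$, which gives \eqref{eq: comprehensible_input}. The main obstacle is verifying the Binomial conditioning identity carefully and checking strict positivity of the various denominators (which follows from the definitions of $K^\ast$, $I_1$, and $I_\mathfrak{X}$); once these are in place, the asymptotics are elementary in both cases.
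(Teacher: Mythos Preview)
Your proposal is correct and follows essentially the same route as the paper: for Case~2 the paper conditions on the set of positive indices among $X_2,\dots,X_{n+1}$ to obtain the identical binomial representation and then lets $n\to\infty$, and for Case~1 it verifies condition~\eqref{eq: sum_diverges_condition} (precisely your observation that $K^\ast=\infty$ forces $I_{\mathfrak X}=0$) and invokes Lemma~\ref{lem: resampling}. The only point you leave implicit, which the paper makes explicit, is that $K^\ast<\infty$ together with $I_2=0$ forces $p<1$; you need this for your Case~2 normalization by $(1-p)^{n-1-K^\ast}$ to be valid.
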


\begin{proof}
If $K^\ast = \infty$, then
%In this case, we claim that for any $\delta>0$,
%\[
%\lim_{n \to \infty} \mathbb{P}(X_1 > I_1 + \delta \mid S_n \leq L) = 0.
%\]
\eqref{eq: sum_diverges_condition} holds. Indeed, we have $\sum_{k=1}^\infty I_k = I_1 <\infty$ and if $\delta'>0$, then choosing $\eta$ such that $\mathbb{P}(X_2 \in [\eta,\delta'))>0$ (which is possible since $I_2=0$ and $K^\ast = \infty$), we find
\[
\sum_{k=1}^\infty \mathbb{E}X_k\mathbf{1}_{\{X_k \leq \delta'\}} \geq \eta \sum_{k=1}^\infty \mathbb{P}(X_k \in [\eta, \delta')) =\infty.
\]
Since \eqref{eq: sum_diverges_condition} implies \eqref{eq: comprehensible_input}, this completes the proof of \eqref{eq: comprehensible_input}.

If $K^\ast <\infty$, then because $I_2=0$, we must have $p<1$. 
%In this case, we claim that for any $\delta > 0$,
%\[
%\lim_{n \to \infty} \mathbb{P}(X_1 > I_1 + \delta \mid S_n \leq L) = \mathbb{P}(X_1 > I_1 + \delta \mid X_1 + \mathfrak{S}_{K^\ast} \leq L).
%\]
To prove \eqref{eq: comprehensible_output}, we condition on which of $X_2, \dots, X_{n+1}$ are positive. Putting $S_n' = X_2 + \dots + X_{n+1}$ and letting $U \subset \mathbb{R}$ be Borel, we can use independence to derive
\begin{align*}
&\mathbb{P}(X_1 \in U, S_{n+1} \leq L)  \\
=~& \mathbb{P}(X_1 \in U, X_1 + S_n' \leq L) \\
=~&\sum_{k=0}^n \sum_{\#J=k} \mathbb{P}\left(X_1 \in U, X_1 + \sum_{i \in J} X_i \leq L, X_i > 0 ~\forall i \in J, X_i=0 ~\forall i \in \{2, \dots, n+1\} \setminus J\right) \\
=~& \sum_{k=0}^n \sum_{\#J=k} p^k (1-p)^{n-k} \mathbb{P}\left(X_1 \in U, X_1 + \sum_{i \in J} X_i \leq L \mid X_i > 0 ~\forall i \in J\right)  \\
=~& \sum_{k=0}^n  \binom{n}{k} p^k (1-p)^{n-k} \mathbb{P}\left(X_1 \in U, X_1 + \mathfrak{S}_k \leq L\right).
\end{align*}
If $k > K^\ast$, the probability is zero. Therefore for $n \geq K^\ast$, we have
\begin{align*}
\mathbb{P}(X_1 > I_1 + \delta \mid S_{n+1} \leq L) &= \frac{\sum_{k=0}^{K^\ast} \binom{n}{k} p^k (1-p)^{n-k} \mathbb{P}\left(X_1 > I_1 + \delta, X_1 + \mathfrak{S}_k \leq L\right)}{\sum_{k=0}^{K^\ast} \binom{n}{k} p^k (1-p)^{n-k} \mathbb{P}\left(X_1 + \mathfrak{S}_k \leq L\right)} \\
&= \frac{\sum_{k=0}^{K^\ast} \binom{n}{k} p^k (1-p)^{-k} \mathbb{P}\left(X_1 > I_1 + \delta, X_1 + \mathfrak{S}_k \leq L\right)}{\sum_{k=0}^{K^\ast} \binom{n}{k} p^k (1-p)^{-k} \mathbb{P}\left(X_1 + \mathfrak{S}_k \leq L\right)}.
\end{align*}
Letting $n \to \infty$, we obtain $\mathbb{P}(X_1 > I_1 + \delta \mid X_1 + \mathfrak{S}_{K^\ast} \leq L)$ and this completes the proof of \eqref{eq: comprehensible_output}.
\end{proof}

Although the above assumptions require that $X_2, X_3, \dots$ are i.i.d, Theorem~\ref{thm: general_parity} can be applied in some form to more general sequences. For example, if $X_1, X_2, \dots$ are nonnegative and independent and $X_k, X_{k+1}, \dots$ are i.i.d.~for some $k \geq 2$, then set $Y_1 = X_1 + \dots +X_{k-1}$ and $Y_\ell = X_{k+\ell-2}$ for $\ell \geq 2$. The sequence $(Y_n)$ now satisfies the conditions of Theorem~\ref{thm: general_parity}. Consequently, we obtain a formula for the limit of $\mathbb{P}(X_1 + \dots + X_{k-1} > I_1 + \dots +I_{k-1} + \delta \mid S_n \leq L)$. In particular, if $(X_n)$ already satisfies the conditions of the theorem, and we define $(Y_n)$ as above for a fixed $k \geq 2$, we can determine whether the (conditional) joint distribution of $(X_1, \dots, X_{k-1})$ converges to $\prod_{j=1}^{k-1} \delta_{\{I_j\}}$.

Returning to the parity example from the previous section, we have $K^\ast = \lfloor L/2\rfloor$ and therefore for $\delta>0$,
\[
\lim_{n \to \infty} \mathbb{P}(X_1 > \delta \mid S_n \leq L) = \mathbb{P}(X_1 >  \delta \mid X_1 + \mathfrak{S}_{K^\ast} \leq L) = \mathbb{P}\left(X_1 = 1 \mid X_1 + 2 \left\lfloor \frac{L}{2}  \right\rfloor \leq L \right).
\]
If $L$ is even, then $2\lfloor L/2 \rfloor = L$ and so the probability is zero. If $L$ is odd, then $2\lfloor L/2\rfloor = L-1$ and the probability is $\mathbb{P}(X_1=1)$, in accordance with the previous results.

As another example, suppose $X_1$ takes values $0$ or $1$ with $\mathbb{P}(X_1=0)>0$, and $X_2, X_3, \dots$ are i.i.d.~with $I_2=0$, $\mathbb{P}(X_2=I_2)>0$, and $\mathbb{P}(X_2=J_2)=0$, where $J_2 =\inf((\text{supp}~X_2)\setminus \{0\})$. For instance, $X_1$ could be a Bernoulli variable and $X_2$ could be $0$ with probability $q$ and uniform $[a,b]$ with probability $1-q$. In this case, one can show from formula \eqref{eq: comprehensible_input} that $\mathbb{P}(X_1>\delta \mid S_n \leq L)$ converges to 0 for all $\delta \in (0,1)$ if and only if $\lceil L/J_2 \rceil > \lceil (L-1)/J_2 \rceil$. 

Formula \eqref{eq: comprehensible_input} gives an exact limit, so it can be used to determine whether the distribution of $X_1$ is trivial in the limit (conditionally) for all examples of the type considered in this section. The answer depends on the infima of the supports of the variables and whether there are point masses at these values, and does not take a simple form.

\subsection{Partition examples}

The next examples are related to random partitions of integers. Let $(X_n)_{n \geq 1}$ be an independent sequence of random variables such that
\begin{equation}\label{eq: partition_distribution}
\mathbb{P}(X_n = 0) = \frac{1}{2} = \mathbb{P}(X_n = n).
\end{equation}
We will prove that
\begin{equation}\label{eq: partition_to_prove}
\liminf_{L \to \infty} \lim_{n \to \infty} \mathbb{P}(X_1 = 1 \mid X_1 + \dots + X_n \leq L) > 0,
\end{equation}
through two different proofs. For notational simplicity, we take $L$ to be an integer.

It is interesting to note that, although the distribution in \eqref{eq: partition_distribution} gives a nontrivial limit as in \eqref{eq: partition_to_prove}, a small perturbation of these variables can give a trivial limit. Namely, letting $\epsilon>0$, and letting $(X_n)_{n \geq 1}$ be an independent sequence of random variables satisfying $\mathbb{P}(X_n=0) = 1/2 - \epsilon = \mathbb{P}(X_n = n)$, but $\mathbb{P}(X_n=1)=2\epsilon$, we can apply the condition in Remark~\ref{rem: gap_remark} to see that $\mathbb{P}(X_1=1 \mid X_1 + \dots + X_n \leq L) \to 0$ as $n \to \infty$ for each $L$.

\bigskip
\noindent
{\bf Partition counting.} For $n \geq L \geq 1$, we have
\begin{align}
\mathbb{P}(X_1 = 1 \mid X_1 + \dots + X_n \leq L) &= \frac{\mathbb{P}(X_1=1) \mathbb{P}(X_2 + \dots + X_n \leq L-1)}{\mathbb{P}(X_1 + \dots + X_n \leq L)} \nonumber \\
&\geq \mathbb{P}(X_1=1) \frac{\mathbb{P}(X_1 + \dots + X_n \leq L-1)}{\mathbb{P}(X_1 + \dots + X_n \leq L)}.\label{eq: initial_partition_counting}
\end{align}
For any integer $m \geq 1$, the value of $\mathbb{P}(X_1 + \dots + X_n \leq m)$ is exactly the probability that the sum of the elements in a uniformly selected random subset of $\{1, \dots, n\}$ is at most $m$. For $n \geq m$, the number of such subsets whose sum is at most $m$ can be expressed as $\sum_{k=0}^m q(k)$, where $q(k)$ is the number of unique integer partitions of $k$. Here, ``unique'' refers to the restriction that the integers forming the partition appear without repitition. Therefore, we have
\[
\mathbb{P}(X_1 + \dots + X_n \leq m) = \frac{1}{2^n} \sum_{k=0}^m q(k).
\]
We combine this with \eqref{eq: initial_partition_counting} to obtain for $n \geq L$
\begin{equation}\label{eq: partition_counting_2}
\mathbb{P}(X_1 = 1 \mid X_1 + \dots + X_n \leq L) \geq \mathbb{P}(X_1=1) \frac{\sum_{k=0}^{L-1} q(k)}{\sum_{k=0}^{L-1} q(k) + q(L)}.
\end{equation}
A standard fact (see \cite[p.~69]{A76}) about integer partitions is that,
\[
q(k) \cdot  \frac{4 \cdot 3^{\frac{1}{4}} k^{\frac{3}{4}}}{e^{\pi \sqrt{\frac{k}{3}}}} \to 1 \text{ as } k \to \infty.
\]
From this one can show that
\[
\frac{q(L)}{\sum_{k=0}^{L-1} q(k)} \to 0 \text{ as } L \to \infty.
\]
Combining this statement with \eqref{eq: partition_counting_2}, we obtain
\begin{equation}\label{eq: partition_lower_bound}
\liminf_{L \to \infty} \lim_{n \to \infty} \mathbb{P}(X_1=1 \mid X_1 + \dots + X_n \leq L) \geq \mathbb{P}(X_1=1),
\end{equation}
and since $\mathbb{P}(X_1=1) = 1/2$, this shows \eqref{eq: partition_to_prove}.

We can strengthen \eqref{eq: partition_to_prove} using the Harris inequality. We observe that since the event $\{X_1 + \dots + X_n \leq L\}$ is decreasing in the variables $(X_n)$ and $\{X_1 = 1\} = \{X_1 \geq 1\}$ is increasing, the Harris inequality gives
\begin{equation}\label{eq: harris_upper_bound}
\mathbb{P}(X_1 = 1 \mid X_1 + \dots + X_n \leq L) \leq \mathbb{P}(X_1=1).
\end{equation}
Along with \eqref{eq: partition_lower_bound}, this proves 
\begin{equation}\label{eq: stronger_statement}
\lim_{L \to \infty} \lim_{n \to \infty} \mathbb{P}(X_1 = 1 \mid X_1 + \dots + X_n \leq L) = \mathbb{P}(X_1=1).
\end{equation}

\bigskip
\noindent
{\bf Injective mapping.}

The second proof for \eqref{eq: partition_to_prove} uses an injective mapping. It is a bit more general, so we allow to select $p \in (0,1)$ and let $(X_n)_{n \geq 1}$ be an independent sequence of random variables satisfying
\[
\mathbb{P}(X_n = 0) = p = 1-\mathbb{P}(X_n = n) \text{ for all } n \geq 1.
\]
Again, for $n \geq L \geq 1$, the Harris inequality gives \eqref{eq: harris_upper_bound}. For the lower bound, we write
\begin{align}
&\mathbb{P}(X_1 = 1 \mid X_1 + \dots + X_n \leq L) \nonumber \\
=~& \frac{\mathbb{P}(X_1=1) \mathbb{P}(X_2 + \dots + X_n \leq L-1)}{\mathbb{P}(X_1 + \dots + X_n \leq L)} \nonumber \\
=~& (1-p) \frac{\mathbb{P}(X_2 + \dots + X_n \leq L-1)}{\mathbb{P}(X_2 + \dots + X_n \leq L-1) + p \mathbb{P}(X_2 + \dots + X_n = L)}. \label{eq: harpers_decomposition}
\end{align}

We now prove that 
\begin{equation}\label{eq: to_prove_injective}
\mathbb{P}(X_2 + \dots + X_n = L) \leq \frac{1-p}{p} \mathbb{P}(X_2 + \dots + X_n \leq L-1).
\end{equation}
To do this, we decompose
\[
\mathbb{P}(X_2 + \dots + X_n = L) = \sum_{r=1}^L \left( p^{n-r}(1-p)^r  \times \#\mathcal{G}_r \right),
\]
where $\mathcal{G}_r = \left\{J \subset \{2, \dots, L\} : \# J = r, ~\sum_{i \in J} i = L\right\}$. Letting $\mathcal{H}_{r-1}$ be the collection 
\[
\mathcal{H}_{r-1} = \left\{J \subset \{2, \dots, L\} : \# J = r-1,~ \sum_{i\in J} i \leq L-1\right\},
\]
we define the function $\phi : \mathcal{G}_r \to \mathcal{H}_{r-1}$ by
\[
\phi(J) = J \setminus \{\min J\}.
\]
Then $\phi$ is injective, and this implies that $\# \mathcal{G}_r \leq \# \mathcal{H}_{r-1}$, so
\begin{align*}
\mathbb{P}(X_2 + \dots + X_n = L) \leq \sum_{r=1}^L \left( p^{n-r}(1-p)^r \times \# \mathcal{H}_{r-1}\right) &= \frac{1-p}{p} \sum_{r=0}^{L-1} \left( p^{n-r}(1-p)^{r} \times \# \mathcal{H}_r\right) \\
&= \frac{1-p}{p} \mathbb{P}\left( X_2 + \dots + X_n \leq L-1\right).
\end{align*}
This is \eqref{eq: to_prove_injective}.

Returning to \eqref{eq: harpers_decomposition} with \eqref{eq: to_prove_injective} in hand, we produce
\[
\mathbb{P}(X_1 = 1 \mid X_1 + \dots + X_n \leq L)  = (1-p) \frac{1}{1 + p \frac{\mathbb{P}(X_2 + \dots + X_n = L)}{\mathbb{P}\left( X_2 + \dots + X_n \leq L-1\right)}} \geq \frac{1-p}{1+1-p} = \frac{1-p}{2-p}.
\]
This shows \eqref{eq: partition_to_prove}. 

It is possible to improve this argument to give the stronger statement \eqref{eq: stronger_statement}. Again, the Harris inequality gives \eqref{eq: harris_upper_bound}, so one only needs to show \eqref{eq: partition_lower_bound}. This can be done by modifying the map $\phi$ to a ``multi-valued map,'' which associates $r$ many images in $\mathcal{H}_{r-1}$ to each $J \in \mathcal{G}_r$. Namely, if $J \in \mathcal{G}_r$ is written as $J = \{x_1, \dots, x_r\}$, then each set $J \setminus \{x_i\}$ for $i=1, \dots, r$ is an element of $\mathcal{H}_{r-1}$. This multi-valued map is injective in the sense that no distinct elements of $\mathcal{G}_r$ are associated to a common element in $\mathcal{H}_{r-1}$. Therefore $\#\mathcal{G}_r \leq r^{-1} \#\mathcal{H}_{r-1}$, and this improved estimate leads to \eqref{eq: partition_lower_bound}. A variant of this method can also handle some sequences $(X_n)$ with $\mathbb{P}(X_n = 0) = p = 1-\mathbb{P}(X_n = k_n)$, where the $k_n$ are integers that do not grow too slowly. 
%See \cite{HarperThesis} for details.

\bigskip
\noindent
{\bf Criteria for nontrivial limits}

We now generalize the distribution considered in \eqref{eq: partition_distribution}. For a sequence $(a_n)_{n \geq 1}$ of positive integers such that $a_1=1$, we let $(X_n)_{n \geq 1}$ be independent random variables satisfying
\begin{equation}\label{eq: jacks_distributions}
\mathbb{P}(X_n = a_n) = \frac{1}{2} = \mathbb{P}(X_n = 0).
\end{equation}
Letting $\alpha_n = \#\{i : a_i = n\}$, we will suppose that $\alpha_n < \infty$ for all $n$, and so, without loss in generality, we will also suppose that $a_1 \leq a_2 \leq a_3 \leq \dots.$ In this setting, we have the following classification result. It states that if $\alpha_n^{1/n}$ is bounded in $n$, then there is a subsequence on which the conditional distribution of $X_1=1$ given $X_1 + \dots + X_n \leq L$ tends to a nontrivial limit. On the other hand, if $\alpha_n^{1/n}$ is not bounded, there is a subsequence on which the conditional distribution tends to a trivial limit.
\begin{Th}\label{thm: jacks_delight}
The following hold for independent $(X_n)_{n \geq 1}$ satisfying \eqref{eq: jacks_distributions}.
\begin{enumerate}
\item If $\limsup_{n \to \infty} \alpha_n^{1/n} = \infty$, then
\[
\liminf_{L \to \infty} \lim_{n \to \infty} \mathbb{P}(X_1=1 \mid X_1 + \dots + X_n \leq L) = 0.
\]
\item If $\limsup_{n \to \infty} \alpha_n^{1/n} < \infty$, then
\[
\limsup_{L \to \infty} \lim_{n \to \infty} \mathbb{P}(X_1=1 \mid X_1 + \dots + X_n \leq L) > 0.
\]
\end{enumerate}
\end{Th}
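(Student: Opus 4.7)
The plan is to convert the conditional probability into an explicit counting ratio and then match the two cases to corresponding upper and lower bounds on its exponential growth rate. Since each $X_i$ is supported on $\{0, a_i\}$ with equal mass, for $n$ larger than $\max\{i : a_i \leq L\}$ (which is finite because each $\alpha_v$ is finite) one has $\mathbb{P}(S_n \leq L) = 2^{-n} |\{J \subseteq \mathbb{N} : \sum_{i \in J} a_i \leq L\}|$. Writing $Q_L = |\{J \subseteq \{2,3,\dots\} : \sum_{i \in J} a_i \leq L\}|$ and splitting by whether $1 \in J$ (recall $a_1 = 1$), this equals $2^{-n}(Q_L + Q_{L-1})$; a parallel computation for the numerator $\mathbb{P}(X_1 = 1, S_n \leq L)$ gives, for all sufficiently large $n$,
\[
\mathbb{P}(X_1 = 1 \mid S_n \leq L) = \frac{Q_{L-1}}{Q_L + Q_{L-1}}.
\]
Hence the theorem reduces to showing $\limsup_L Q_L/Q_{L-1} = \infty$ in case (1) and $\liminf_L Q_L/Q_{L-1} < \infty$ in case (2).

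Case (1) I prove by contrapositive. If $\limsup_L Q_L/Q_{L-1} < \infty$, then $Q_L \leq K Q_{L-1}$ for some $K$ and all $L \geq L_0$, which iterates to $Q_L \leq C_0 K^L$ for a constant $C_0$. On the other hand, each index $i \geq 2$ with $a_i = L$ contributes a singleton $\{i\}$ to $Q_L - Q_{L-1}$, and for $L \geq 2$ there are exactly $\alpha_L$ such indices (since $a_1 = 1 < L$), so $Q_L \geq \alpha_L$. Combining the two bounds forces $\alpha_L^{1/L}$ to remain bounded in $L$, contradicting $\limsup_n \alpha_n^{1/n} = \infty$.

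For case (2) I bound $Q_L$ from above using the generating function $f(z) = \prod_{v \geq 1}(1+z^v)^{\alpha_v}$, whose coefficient of $z^k$ counts subsets $J \subseteq \mathbb{N}$ with $\sum_{i \in J} a_i = k$. The hypothesis yields $C < \infty$ with $\alpha_v \leq C^v$ for all large $v$, so for $0 < z < 1/C$ we have $\log f(z) \leq \sum_v \alpha_v z^v < \infty$ (the finitely many small-$v$ exceptions contribute only an additive constant). Evaluating at $z = 1/(2C)$ and applying a Cauchy-type bound gives $[z^k] f(z) \leq f(1/(2C))(2C)^k$, and summing produces $Q_L + Q_{L-1} \leq D(2C)^L$ for a constant $D$. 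If $Q_L/Q_{L-1} \to \infty$, telescoping would force $Q_L$ to exceed any fixed exponential rate, contradicting this bound; therefore $\liminf_L Q_L/Q_{L-1} =: K^\ast < \infty$, and along any subsequence $L_j$ realizing the liminf, $Q_{L_j-1}/(Q_{L_j}+Q_{L_j-1}) \to 1/(1+K^\ast) > 0$, which is the desired conclusion.

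The main obstacle is organizational: carefully interchanging the $n \to \infty$ counting with the $L \to \infty$ limit, and arranging the generating-function estimate in case (2) to absorb the finitely many initial $v$ where $\alpha_v \leq C^v$ may not yet hold. Beyond that, the argument is simply matched exponential bounds reflecting that the growth rate of $Q_L$ becomes superexponential exactly when $\alpha_n^{1/n}$ is unbounded.
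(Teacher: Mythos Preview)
Your argument is correct and follows the same overall strategy as the paper: reduce the conditional probability to a ratio of partition-counting quantities and control the growth of those counts via the generating function $\prod_{v}(1+z^{v})^{\alpha_v}$. There are two small organizational differences worth noting. First, by counting subsets of $\{2,3,\dots\}$ and splitting on whether $1\in J$, you obtain the exact identity $\mathbb{P}(X_1=1\mid S_n\le L)=Q_{L-1}/(Q_L+Q_{L-1})$ for large $n$; the paper instead counts subsets of all of $\mathbb{N}$, uses the bijection $\{X_1=1,S_n\le L\}\leftrightarrow\{X_1=0,S_n\le L-1\}$ together with the Harris inequality, and sandwiches the limit between $\tfrac12\,Q(L-1)/Q(L)$ and $Q(L-1)/Q(L)$. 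Second, in case~(1) you use the elementary singleton lower bound $Q_L\ge\alpha_L$ directly, while the paper passes through the equivalence $\limsup_L q(L)^{1/L}<\infty\Leftrightarrow\limsup_n\alpha_n^{1/n}<\infty$ via the radius of convergence of $f$ and of $\log f$; that packaging handles both implications simultaneously, whereas you argue the two cases separately. Neither difference is substantive.
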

\begin{proof}
We begin with a couple of definitions. Set
\[
r_n = \sum_{i=1}^n \alpha_i,
\]
and for a positive integer $L$, we define
\[
q(L) = \#\{(v_1, \dots, v_L) \in \{0,1\}^{r_L} : a_1v_1 + \dots + a_{r_L}v_{r_L} = L\}
\]
and $Q(L) = q(1) + \dots + q(L)$. Here we have defined $r_n$ as the number of $a_i$'s that are $\leq n$, and $q(L)$ counts a kind of partition of $L$ with multiplicity. It is the number of ways to represent $L$ as the sum of the integers $a_1, a_2, \dots$, which may not be distinct. Concerning these definitions, we observe that
\begin{equation}\label{eq: jacks_delight_1}
\mathbb{P}(X_1 + \dots + X_n = L) = 2^{-n} q(L) \text{ for } n \geq r_L.
\end{equation}
This is because the left side is
\[
\mathbb{P}(X_1 + \dots + X_{r_L} = L, ~X_{r_L+1} = \dots = X_n = 0) = 2^{-n+r_L} 2^{-r_L} q(L).
\]

We claim that for $n \geq r_L$,
\begin{align}
\frac{Q(L-1)}{Q(L)} \mathbb{P}(X_1=0) &\leq \mathbb{P}(X_1=1 \mid X_1 + \dots + X_n \leq L) \nonumber \\
&= \mathbb{P}(X_1 = 0 \mid X_1 + \dots + X_n \leq L) \frac{Q(L-1)}{Q(L)}. \label{eq: white_bowl}
\end{align}
To prove this, observe that there is a natural bijection between the events $\{X_1 = 1, X_1 + \dots + X_n \leq L\}$ and $\{X_1=0, X_1 + \dots + X_n \leq L-1\}$ obtained by flipping the value of $X_1$ from $1$ to $0$. Therefore
\begin{equation}\label{eq: white_bowl_3}
\mathbb{P}(X_1=1, X_1 + \dots + X_n \leq L) = \mathbb{P}(X_1=0, X_1 + \dots + X_n \leq L-1).
\end{equation}
Because \eqref{eq: jacks_delight_1} implies 
\begin{equation}\label{eq: white_bowl_2}
\frac{\mathbb{P}(X_1 + \dots + X_n \leq L-1)}{\mathbb{P}(X_1 + \dots + X_n \leq L)} = \frac{Q(L-1)}{Q(L)},
\end{equation}
this shows the equality in \eqref{eq: white_bowl}. The inequality of \eqref{eq: white_bowl} follows from the Harris inequality because if we apply it in \eqref{eq: white_bowl_3}, we obtain
\[
\mathbb{P}(X_1 = 1, X_1 + \dots + X_n \leq L) \geq \mathbb{P}(X_1=0) \mathbb{P}(X_1 + \dots + X_n \leq L-1)
\]
and then we can use \eqref{eq: white_bowl_2} to get the inequality.

The display \eqref{eq: white_bowl} readily implies
\begin{equation}\label{eq: green_bowl}
\frac{1}{2} \cdot \frac{Q(L-1)}{Q(L)} \leq \lim_{n \to \infty} \mathbb{P}(X_1=1 \mid X_1 + \dots + X_n \leq L) \leq \frac{Q(L-1)}{Q(L)}.
\end{equation}
However it is elementary that
\begin{equation}\label{eq: green_bowl_2}
\liminf_{L \to \infty} \frac{Q(L)}{Q(L-1)} \leq \liminf_{L \to \infty} Q(L)^{\frac{1}{L}} \leq \limsup_{L \to \infty} Q(L)^{\frac{1}{L}} \leq \limsup_{L \to \infty} \frac{Q(L)}{Q(L-1)}
\end{equation}
and furthermore, we have
\begin{equation}\label{eq: green_bowl_3}
\limsup_{L \to \infty} Q(L)^{\frac{1}{L}} < \infty \text{ if and only if } \limsup_{L \to \infty} q(L)^{\frac{1}{L}} < \infty.
\end{equation}
This latter statement holds because $Q(L) \geq q(L)$, but if $q(L)^{1/L}$ is bounded by $C \geq 1$, then $Q(L) \leq LC^L$, giving that $Q(L)^{1/L}$ is also bounded.

We combine \eqref{eq: green_bowl}, \eqref{eq: green_bowl_2}, and \eqref{eq: green_bowl_3} to obtain the following.
\begin{enumerate}
\item If $\limsup_{L \to \infty} q(L)^{1/L} = \infty$, then
\[
\liminf_{L \to \infty} \lim_{n \to \infty} \mathbb{P}(X_1=1 \mid X_1 + \dots + X_n \leq L) = 0.
\]
\item If $\limsup_{L \to \infty} q(L)^{1/L} < \infty$, then
\[
\limsup_{L \to \infty} \lim_{n \to \infty} \mathbb{P}(X_1=1 \mid X_1 + \dots + X_n \leq L) > 0.
\]
\end{enumerate}
It remains to show that 
\begin{equation}\label{eq: last_to_show_bowl}
q(L)^{1/L} \text{ is bounded in }L \text{ if and only if }\alpha_n^{1/n} \text{ is bounded in }n, 
\end{equation}
and this follows from well-known methods in analytic number theory. To summarize, defining the power series
\[
f(z) = 1+\sum_{L=1}^\infty q(L)z^L = \prod_{n=1}^\infty (1+z^n)^{\alpha_n},
\]
we have the representation
\[
\log f(z) = \sum_{n=1}^\infty \alpha_n \log(1+z^n).
\]
(See \cite[Sec.~I.2.2]{FS08}.) In addition, the radius of convergence for $f$ (which is $\left( \limsup_{L \to \infty} q(L)^{1/L}\right)^{-1}$) is nonzero if and only if the radius of convergence for $\log f$ is nonzero (although these radii need not coincide). By expansion of $\log (1+z^n)$, the series for $\log f(z)$ has nonzero radius of convergence if and only if $\limsup_{n \to \infty} \alpha_n^{1/n} < \infty$.

\end{proof}

\subsection{Concluding remarks}

The above examples show that the limiting distribution from Question~\ref{question: sequence_question} can be trivial or non-trivial depending on the distribution of the variables $(X_n)$. This diverse range of behavior suggests there may not be a simple answer to the question. In fact, there are even sequences for which the limit does not exist. One example is given by an independent sequence $(X_n)_{n \geq 1}$ with $\mathbb{P}(X_n = 0) = 1/2 = \mathbb{P}(X_n = a_n)$, where $a_n$ is chosen as
\[
a_n = \begin{cases}
1 &\quad\text{if } n=1 \\
2 &\quad\text{if } r_{2k} \leq n < r_{2k+1} \text{ for some } k \geq 0 \\
3 &\quad\text{if } r_{2k+1} \leq n < r_{2k+2} \text{ for some } k \geq 0,
\end{cases}
\]
where $2 = r_0 < r_1 < r_2 < \dots$ is a sequence of integers. If $(r_k)$ grows rapidly enough, then the limit
\[
\lim_{n \to \infty} \mathbb{P}(X_1=1 \mid X_1 + \dots + X_n \leq 4) \text{ does not exist.}
\]

\end{document}